\newtheorem{thm}{Theorem}
\newtheorem{prop}{Proposition}
\newtheorem{defn}{Definition}
\newtheorem{rmk}{Remark}
\begin{document}
\title{{\bf The Monge-Amp\`{e}re Equation}}
\author{Connor Mooney}
\date{2018}
\maketitle
\tableofcontents

\newpage
\section{Introduction}
In this survey we discuss the basic properties of Alexandrov solutions to the Monge-Amp\`{e}re equation. We then discuss the interior and boundary regularity for Alexandrov solutions to $\det D^2u = 1$. At the end we list some recent advances
and open questions.

\vspace{3mm}

Our choice of topics reflects the state of the subject as of $\sim 1990$. Since many of these topics (as well as more modern ones) have detailed expositions
elsewhere, we have endeavored to highlight only the key ideas and to cite appropriate references. However, we also decided to include some important results which it seems were not revisited by the PDE
community in recent times. These include Calabi's interior $C^3$ estimate for solutions to $\det D^2u = 1$ (\cite{Cal}, 1958), and the approaches of Cheng-Yau (\cite{CY}, 1977) and Lions (\cite{L}, 1983)
to obtain classical solutions to the Dirichlet problem.

\vspace{3mm}

The survey is based on two mini-courses given by the author in May 2018. One was for ``Advanced Lectures in Nonlinear Analysis" at l'Universit\`{a} degli Studi di Torino, and the other for the Oxford PDE CDT.
It is my pleasure to thank Paolo Caldiroli, Francesca Colasuonno, and Susanna Terracini for their kind invitation and hospitality in Torino. I am very grateful
to John Ball for the invitation to Oxford. Finally, I would like to thank Alessio Figalli for comments on a preliminary draft.
This work was partially supported by NSF grant DMS-1501152 and the ERC grant ``Regularity and Stability in PDEs.''

\newpage
\section{Motivation}
The Monge-Amp\`{e}re equation
$$\det D^2u = f(x,\,u,\,\nabla u)$$
for a convex function $u$ on $\mathbb{R}^n$ arises in several interesting applications. In this section we list a few of them.

\subsection{Prescribed Gauss Curvature}
The Gauss curvature $K(x)$ of the graph of a function $u$ on $\mathbb{R}^n$ at $(x,\,u(x))$ is given by
$$\det D^2u = K(x)(1 + |\nabla u|^2)^{\frac{n+2}{2}}.$$
It is a good exercise to derive this formula.

\subsection{Optimal Transport}
Given probability densities $f,\,g$ supported on domains $\Omega_f,\, \Omega_g$ in $\mathbb{R}^n$, the optimal transport problem asks to minimize the transport cost
$$J(T) = \int_{\Omega_f} |T(x) - x|^2f(x)\,dx$$
over measure-preserving maps $T: \Omega_f \rightarrow \Omega_g$ (that is, $f(x)\,dx = g(T(x))\det DT(x)dx$). An important theorem of Brenier says that the optimal map exists,
and is given by the gradient of a convex function $u$ on $\Omega_f$ \cite{Br}. The measure-preserving condition implies
$$\det D^2u = \frac{f(x)}{g(\nabla u(x))},$$
in a certain weak sense.

\subsection{Fluid Dynamics}
Large-scale fluid flows in $\mathbb{R}^2$ are modeled by a system of evolution equations for a probability density $\rho(x,\,t)$ and a function $u(x,\,t)$ that is convex in $x$ for all $t$. The system is
$$\begin{cases}
\partial_t\rho + (x- \nabla u)^{\perp} \cdot \nabla \rho = 0, \\
\det D^2u = \rho.
\end{cases} $$
Here $w^{\perp}$ denotes the counter-clockwise rotation of $w$ by $\frac{\pi}{2}$.
This can be viewed as a fully nonlinear version of the incompressible Euler equations in $2D$, where the Monge-Amp\`{e}re operator replaces
the Laplace operator.

\newpage
\section{Weak Solutions}
In this section we introduce a useful notion of weak solution based on the idea of polyhedral approximations.
We then solve the Dirichlet problem on bounded domains. Detailed expositions of these topics can be found in work of Cheng-Yau \cite{CY}, and in the books of Gutierrez \cite{Gut}
and Figalli \cite{F}.


\subsection{Alexandrov Solutions}
If $v$ is a $C^2$ convex function on $\mathbb{R}^n$, then the area formula gives
$$\int_{\Omega} \det D^2v \,dx = |\nabla v(\Omega)|.$$
For an arbitrary convex function $v$ on a domain $\Omega \subset \mathbb{R}^n$ and $E \subset \Omega$ we define
$$Mv(E) = |\partial v(E)|,$$
where $\partial v(E)$ is the set of slopes of supporting hyperplanes to the graph of $v$ (the sub-gradients of $v$) over points in $E$. We have (\cite{F}, Theorem $2.3$):
\begin{prop}\label{MAMeasure}
$Mv$ is a Borel measure on $\Omega$.
\end{prop}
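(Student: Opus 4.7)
The plan is to define $\mathcal{F} = \{E \subset \Omega : \partial v(E) \text{ is Lebesgue measurable}\}$ and establish three things: (i) $\mathcal{F}$ contains every compact subset of $\Omega$; (ii) $\mathcal{F}$ is a $\sigma$-algebra, hence contains the Borel $\sigma$-algebra; and (iii) $Mv$ is countably additive on $\mathcal{F}$. The engine for both (ii) and (iii) will be a null-set lemma stating that the set of slopes realized as subgradients at two distinct base points has Lebesgue measure zero.

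For step (i), I would verify that $\partial v(K)$ is compact whenever $K$ is a compact subset of $\Omega$. Convex functions are locally Lipschitz in the interior of their domain, so there is an $L$-Lipschitz bound for $v$ on a neighborhood of $K$; in particular all subgradients over $K$ lie in $\overline{B_L}$. Given $p_n \in \partial v(x_n)$ with $x_n \in K$, one extracts subsequences $x_n \to x \in K$ and $p_n \to p$ and passes to the limit in
$$v(y) \geq v(x_n) + p_n \cdot (y - x_n), \qquad y \in \Omega,$$
using continuity of $v$, to conclude $p \in \partial v(x) \subset \partial v(K)$. Thus $\partial v(K)$ is closed and bounded. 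Since $\Omega$ is $\sigma$-compact and $\partial v$ trivially commutes with unions of sets, every $F_\sigma$ subset of $\Omega$ (in particular every open one) lies in $\mathcal{F}$.

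The crux is the following lemma: for
$$N := \{p \in \mathbb{R}^n : \text{there exist } x \neq y \text{ in } \Omega \text{ with } p \in \partial v(x) \cap \partial v(y)\},$$
one has $|N| = 0$. I would prove this via the Legendre transform $v^*(p) := \sup_{x \in \Omega}(p \cdot x - v(x))$, which is convex on $\mathbb{R}^n$ with values in $\mathbb{R} \cup \{+\infty\}$. A short calculation shows that whenever $p \in \partial v(x)$ one has $x \in \partial v^*(p)$; hence $p \in N$ forces $\partial v^*(p)$ to contain at least two distinct points, so $v^*$ fails to be differentiable at $p$. Convex functions on $\mathbb{R}^n$ are differentiable almost everywhere on the interior of their effective domain (e.g.\ by Rademacher), and the topological boundary of a convex set in $\mathbb{R}^n$ is Lebesgue-null, so $|N|=0$.

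With the lemma the rest is bookkeeping. $\mathcal{F}$ is closed under countable unions by $\partial v\bigl(\bigcup E_i\bigr) = \bigcup \partial v(E_i)$. For complements within $\Omega$, if $E \in \mathcal{F}$ then
$$\partial v(\Omega \setminus E) = \bigl(\partial v(\Omega) \setminus \partial v(E)\bigr) \cup \bigl(\partial v(\Omega \setminus E) \cap \partial v(E)\bigr),$$
whose first summand is Lebesgue measurable and whose second is contained in $N$, hence Lebesgue-null; so $\Omega \setminus E \in \mathcal{F}$. Thus $\mathcal{F}$ is a $\sigma$-algebra containing the compacts, hence contains all Borel sets. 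Countable additivity follows the same way: disjoint Borel $E_i$ produce sets $\partial v(E_i)$ with pairwise intersections in $N$, so $\bigl|\partial v\bigl(\bigcup_i E_i\bigr)\bigr| = \sum_i |\partial v(E_i)|$. The one genuine difficulty is the null-set lemma, and passing to $v^*$ reduces it to the a.e.\ differentiability of convex functions; a more elementary alternative is a Vitali-type covering argument, stratifying $N$ by the diameter of $\partial v^{-1}(p)$.
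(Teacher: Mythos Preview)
Your argument is correct and follows exactly the architecture the paper points to: the paper does not give a self-contained proof of the proposition but identifies the null-set lemma (Proposition~\ref{KeyFact}) as the key ingredient, and your steps (i)--(iii) are the standard way to build the Borel measure once that lemma is in hand.

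The only place you diverge is in the proof of the null-set lemma itself. You pass to the Legendre transform $v^*$ and reduce to the a.e.\ differentiability of convex functions; the paper instead sketches a direct density-point argument using monotonicity of the subdifferential (if $p\in\partial v(E_1)$ with $E_1,E_2$ separated by a hyperplane, then $\partial v(E_2)$ misses a thin cone at $p$, so $p$ is not a Lebesgue point of $\partial v(E_2)$). Your duality argument is slick but requires a small amount of care about the effective domain of $v^*$ (which you handle correctly by noting its boundary is null); the paper's heuristic is more elementary and stays on the $x$-side, but is only a sketch. Either route is standard.
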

\noindent 
We call $Mv$ the Monge-Amp\`{e}re measure of $v$. 

It is easy to check that if $v \in C^2$, then $Mv = \det D^2v\,dx$. A more interesting example is the polyhedral graph 
$$v = \max_{1 \leq i \leq 3}\{p_i \cdot x\}$$ 
over $\mathbb{R}^2$. The set $\partial v(0)$ is the (closed) triangle with vertices $\{p_i\}$. The sub-gradients
of the ``edges" of the graph are the segments joining $p_i$, and the sub-gradients of the ``faces" are $p_i$. Thus, $Mv$ is a Dirac mass at $0$ with weight given by the area
of the triangle. 

\begin{defn}
Let $\mu$ be a Borel measure on a domain $\Omega \subset \mathbb{R}^n$. We say that a convex function $u$ on $\Omega$ is an Alexandrov solution of $\det D^2u = \mu$ if $Mu = \mu$.
\end{defn}

The key fact that is used to prove Proposition \ref{MAMeasure}, and is essential for many other parts of the theory, is (\cite{F}, Lemma $A.30$):

\begin{prop}\label{KeyFact}
Let $v$ be a convex function on a domain $\Omega \subset \mathbb{R}^n$. Then 
$$|\{p \in \mathbb{R}^n: p \in \partial v(x) \cap \partial v(y) \text{ for some } x \neq y \in \Omega\}| = 0.$$
\end{prop}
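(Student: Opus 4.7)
My plan is to reduce the statement to the almost-everywhere differentiability of convex functions via the Legendre transform. I would define
\[
v^*(p) = \sup_{x \in \Omega}\bigl(p \cdot x - v(x)\bigr), \qquad p \in \mathbb{R}^n,
\]
which is a convex lower semicontinuous function with values in $\mathbb{R} \cup \{+\infty\}$. The first step is a short duality check: if $p \in \partial v(x)$ for some $x \in \Omega$, then rearranging the subgradient inequality $v(y) \geq v(x) + p\cdot(y-x)$ gives $p \cdot y - v(y) \leq p \cdot x - v(x)$ for every $y \in \Omega$, so the supremum defining $v^*(p)$ is attained at $x$ and is finite; a one-line computation then shows $x \in \partial v^*(p)$.

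With the duality in hand, the exceptional set
\[
E = \{p \in \mathbb{R}^n : p \in \partial v(x) \cap \partial v(y) \text{ for some } x \neq y \in \Omega\}
\]
lies inside the set of $p$ where $\partial v^*(p)$ contains more than one element, i.e., inside the non-differentiability set of $v^*$. On the interior of the effective domain $D = \{v^* < \infty\}$ the convex function $v^*$ is locally Lipschitz, so by Rademacher's theorem it is differentiable almost everywhere there, and for convex functions non-differentiability is equivalent to non-uniqueness of the subgradient. Combined with the fact that $D$ is a convex subset of $\mathbb{R}^n$ and hence $\partial D$ has Lebesgue measure zero, this will give $|E| = 0$.

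The main point requiring care is the bookkeeping around $v$ being defined only on $\Omega$ rather than on all of $\mathbb{R}^n$: one must verify that the duality $p \in \partial v(x) \Rightarrow x \in \partial v^*(p)$ still works in this setting (it does, directly from the definitions, provided $\Omega$ is convex so that the subgradient inequality is nontrivial), and one should be prepared to cover $\Omega$ by countably many convex subdomains and take a union of null sets if needed. Once these routine checks are in place, the proposition reduces cleanly to the classical almost-everywhere differentiability of convex functions.
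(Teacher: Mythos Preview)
Your argument is correct. The duality step $p \in \partial v(x) \Rightarrow x \in \partial v^*(p)$ holds regardless of whether $\Omega$ is convex: once $p \in \partial v(x)$ you have $v^*(p) = p\cdot x - v(x)$, and for any $q$ the definition of $v^*$ gives $v^*(q) \ge q\cdot x - v(x) = v^*(p) + x\cdot(q-p)$. So the ``routine checks'' you flag are even more routine than you suggest, and the covering by convex subdomains is not needed. After that, $E \subset \{v^* < \infty\}$, the boundary of this convex set is null, and on the interior $v^*$ is locally Lipschitz with $\partial v^*(p)$ non-singleton precisely at non-differentiability points, so Rademacher (or the more elementary a.e.\ differentiability of convex functions) finishes the job.

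The paper does not actually prove the proposition; it cites Figalli's book and offers a heuristic for the consequence $|\partial v(E_1)\cap \partial v(E_2)| = 0$ when $E_1,E_2$ are disjoint. That heuristic is a genuinely different idea from yours: it argues directly via monotonicity of the subgradient map that if $p \in \partial v(E_1)$ then $\partial v(E_2)$ must miss a thin cone with vertex at $p$, so $p$ cannot be a Lebesgue density point of $\partial v(E_2)$. Your Legendre-transform route is the classical one and is clean because it reduces everything to a single well-known theorem; the paper's density/monotonicity picture is more hands-on and geometric, and has the advantage that it makes the mechanism visible without introducing the conjugate function. Either can be made into a complete proof.
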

\noindent In particular, if $E_1,\,E_2 \subset \Omega$ and $E_1 \cap E_2 = \emptyset$, then $|\partial v(E_1) \cap \partial v(E_2)| = 0$.
To understand heuristically why the latter is true, consider the case that $\Omega = \mathbb{R}^n$ and $E_i$ are compact, with $E_1 \subset \{x_n < 0\}$ and $E_2 \subset \{x_n > 0\}$. 
If $0 \in \partial v(E_1),$ then by convexity (monotonicity of subgradients) we have that $\partial v(E_2)$ is missing a thin cone around the negative $x_n$-axis.
Thus, $0$ is not a Lebesgue point of $\partial v(E_2)$.


\subsection{Maximum Principle and Compactness}
Alexandrov solutions are useful because they satisfy a maximum principle and have good compactness properties.

\vspace{3mm}

We first observe that if $u$ and $v$ are convex on a bounded domain $\Omega$, with $u = v$ on $\partial \Omega$ and $u \leq v$ in $\Omega$, then
$$\partial v(\Omega) \subset \partial u(\Omega).$$
This is a simple consequence of convexity. From this observation one concludes the comparison principle (\cite{F}, Theorem $2.10$):
\begin{prop}\label{ComparisonPrinciple}
Assume $u$ and $v$ are convex on a bounded domain $\Omega$, with $u = v$ on $\partial \Omega$. If $Mu \geq Mv$ in $\Omega$, then $u \leq v$ in $\Omega$.
\end{prop}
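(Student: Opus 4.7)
\vspace{3mm}
\noindent\textbf{Proof proposal.}
The plan is to argue by contradiction: if $u(x_0) > v(x_0)$ at some interior $x_0$, I perturb $u$ upward by a small strictly convex cap that vanishes on $\partial\Omega$, producing a function $\tilde u_\delta$ whose Monge-Amp\`ere measure strictly exceeds $Mu$ and whose contact region with $v$ is compactly contained in $\Omega$; the sub-gradient inclusion noted just before the proposition then forces the \emph{opposite} inequality on that region, yielding a contradiction.

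Concretely, let $R \geq \mathrm{diam}(\Omega)$ and for small $\delta > 0$ set
$$\tilde u_\delta(x) = u(x) + \delta\bigl(|x - x_0|^2 - R^2\bigr).$$
Since $|x - x_0| \leq R$ on $\bar\Omega$, we have $\tilde u_\delta \leq u = v$ on $\partial\Omega$, while $\tilde u_\delta(x_0) = u(x_0) - \delta R^2 > v(x_0)$ once $\delta$ is small. Hence $B_\delta := \{\tilde u_\delta > v\}$ is a nonempty open set, compactly contained in $\Omega$, with $\tilde u_\delta = v$ on $\partial B_\delta$ by continuity. Applying the sub-gradient observation (with the roles of $u$ and $v$ played by $v$ and $\tilde u_\delta$, on the domain $B_\delta$) yields $\partial \tilde u_\delta(B_\delta) \subset \partial v(B_\delta)$, and hence
$$M\tilde u_\delta(B_\delta) \leq Mv(B_\delta) \leq Mu(B_\delta),$$
the last inequality using the hypothesis $Mu \geq Mv$ as measures.

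The crux is a strict lower bound
$$M\tilde u_\delta \geq Mu + (2\delta)^n\,dx$$
as Borel measures on $\Omega$. In the $C^2$ case this follows from Minkowski's determinant inequality $\det(A + B)^{1/n} \geq \det(A)^{1/n} + \det(B)^{1/n}$ applied to $A = D^2 u$ and $B = 2\delta I$ (expand the $n$th power and drop non-negative cross terms). For general convex $u$ one approximates by smooth convex $u_k \to u$ uniformly and passes to the limit, invoking the weak continuity of the Monge-Amp\`ere measure under uniform convergence of convex functions (itself a standard consequence of Proposition \ref{KeyFact}). Integrating on $B_\delta$ and combining with the previous display produces
$$Mu(B_\delta) + (2\delta)^n|B_\delta| \leq M\tilde u_\delta(B_\delta) \leq Mu(B_\delta),$$
and since $Mu(B_\delta) < \infty$ (as $u$ is locally Lipschitz on $\Omega$, its sub-gradients over compactly contained sets are bounded), we cancel to obtain $|B_\delta| \leq 0$, contradicting that $B_\delta$ is a nonempty open set.

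The main obstacle is the strict Monge-Amp\`ere lower bound on the perturbation; without the extra $(2\delta)^n|B_\delta|$ term the chain collapses to a tautology, so the Minkowski determinant / Brunn-Minkowski input is the one nontrivial ingredient beyond the sub-gradient inclusion already recorded in the excerpt.
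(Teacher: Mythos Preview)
Your argument is correct and is exactly the standard proof (as in the cited reference \cite{F}, Theorem~2.10); the paper itself gives no proof beyond recording the sub-gradient inclusion $\partial v(\Omega)\subset\partial u(\Omega)$ and deferring to \cite{F}, and your perturbation by a strictly convex cap is precisely how one upgrades that inclusion to a strict inequality and closes the contradiction. One cosmetic point: take $R>\mathrm{diam}(\Omega)$ strictly, so that $\tilde u_\delta< v$ on $\partial\Omega$ and $B_\delta$ is genuinely compactly contained in $\Omega$ (which you use for $Mu(B_\delta)<\infty$).
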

\noindent Another important consequence is the Alexandrov maximum principle (\cite{F}, Theorem $2.8$):
\begin{prop}\label{AlexandrovMaxPrinciple}
 If $u$ is convex on a bounded convex domain $\Omega$ and $u|_{\partial \Omega} = 0$, then
 $$|u(x)| \leq C(n,\,\text{diam}(\Omega), Mu(\Omega))\text{dist.}(x,\,\partial \Omega)^{1/n}.$$
\end{prop}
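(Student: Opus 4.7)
The plan is to use the classical cone comparison argument. Since $u$ is convex with $u|_{\partial\Omega} = 0$, one has $u \leq 0$ in $\Omega$, and we may assume $u(x_0) < 0$ at the point $x_0$ where we want the estimate. Write $d = \text{dist}(x_0, \partial\Omega)$ and $D = \text{diam}(\Omega)$. The main device is the cone $v$ whose graph over $\overline\Omega$ is the convex hull of the apex $(x_0, u(x_0))$ together with $\partial\Omega \times \{0\}$: that is, $v$ is affine on each segment from $x_0$ to a point of $\partial\Omega$, with $v(x_0) = u(x_0)$ and $v|_{\partial\Omega} = 0$.

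First I would verify the comparison $u \leq v$ in $\Omega$. This is a two-line convexity argument: any $x \in \Omega$ can be written as $x = tx_0 + (1-t)y$ with $y \in \partial\Omega$ and $t \in [0,1]$, so $u(x) \leq tu(x_0) + (1-t)\cdot 0 = tu(x_0)$, while by construction $v(x) = tu(x_0)$. Next I would invoke the subdifferential containment $\partial v(\Omega) \subset \partial u(\Omega)$ noted just before Proposition \ref{ComparisonPrinciple} to reduce the problem to a lower bound on $|\partial v(\{x_0\})|$: since $|\partial v(\{x_0\})| \leq Mv(\Omega) \leq Mu(\Omega)$, the whole statement follows once I prove $|\partial v(\{x_0\})| \geq c(n)\, |u(x_0)|^n/(d\, D^{n-1})$.

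The main obstacle is this lower bound, which is the heart of the argument. A direct unwinding of the definition of the cone gives
$$\partial v(x_0) = \{p \in \mathbb{R}^n : p \cdot (y - x_0) \leq |u(x_0)| \text{ for every } y \in \Omega\}.$$
To extract the correct exponent I would translate $x_0 = 0$ and choose coordinates so that the nearest boundary point is $-de_n$. Because $B_d(0) \subset \Omega$ is interior-tangent there, the supporting hyperplane to $\Omega$ at $-de_n$ must be perpendicular to $e_n$, trapping $\Omega$ in the asymmetric slab $\{x_n \geq -d\} \cap B_D(0)$. Splitting $p = (p', p_n)$ and estimating $p' \cdot y'$ and $p_n y_n$ separately using $|y'| \leq D$ and $y_n \in [-d, D]$, I expect to see that $\partial v(0)$ contains the ``flat box''
$$\Bigl\{(p', p_n) : |p'| \leq \tfrac{|u(0)|}{2D},\ -\tfrac{|u(0)|}{2d} \leq p_n \leq 0\Bigr\},$$
of volume $\sim |u(0)|^n/(d\, D^{n-1})$, as required.

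The subtle point is precisely this anisotropy: an isotropic bound like $\partial v(0) \supset B_{|u(0)|/D}$ only yields $|u(x_0)| \lesssim D \cdot Mu(\Omega)^{1/n}$, gaining nothing from the distance to the boundary. The single ``narrow'' direction of width $d$ versus the $n-1$ ``wide'' directions of width $\sim D$—a consequence of the tangency of the largest inscribed ball with the nearest boundary point—is what produces the factor $D^{n-1}/d^{n-1}$ beyond the isotropic estimate, and hence the sharp $1/n$-Hölder exponent on $\text{dist}(x, \partial\Omega)$.
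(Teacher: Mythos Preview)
Your proposal is correct and follows the same cone-comparison argument as the paper. The only cosmetic difference is in the lower bound for $|\partial v(x_0)|$: the paper notes that $\partial v(x_0)$ is convex and contains both the ball $B_{|u(x_0)|/D}$ and a single point of norm $|u(x_0)|/d$ (in the direction of the nearest boundary point), hence their convex hull, whereas you exhibit an explicit cylinder via the slab containment $\Omega \subset \{x_n \geq -d\} \cap B_D$.
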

\noindent This says that functions with bounded Monge-Amp\`{e}re mass have a $C^{1/n}$ modulus of continuity near the boundary of a sub level set, that depends only on rough geometric properties of this set.
The proof is to compare $u$ with the cone $v$ with vertex $(x,\,u(x))$ passing through $(\partial \Omega,\,0)$. Indeed, $\partial v(x)$ contains a point of size $|u(x)|/\text{dist.}(x,\,\partial\Omega)$
and a ball of radius $|u(x)|/\text{diam.}(\Omega)$. Since $\partial v(x)$ is convex we conclude that 
$$Mv(x) \geq c(n)|u(x)|^n/[\text{dist.}(x,\,\partial\Omega)\text{diam.}^{n-1}(\Omega)].$$

The other important property of Alexandrov solutions is closedness under uniform convergence. That is:
\begin{prop}\label{Closedness}
If $u_k$ converge uniformly to $u$ in $\Omega \subset \mathbb{R}^n$, then $Mu_k$ converges weakly to $Mu$ in $\Omega$.
\end{prop}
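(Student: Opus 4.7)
The plan is to verify the Portmanteau-type criterion for weak convergence of locally finite Borel measures: it suffices to show that $\liminf_k Mu_k(U) \geq Mu(U)$ for every open $U \subset \Omega$ with compact closure in $\Omega$, and $\limsup_k Mu_k(K) \leq Mu(K)$ for every compact $K \subset \Omega$. The key preliminary is a closedness property for subgradients under uniform convergence: if $x_k \to x \in \Omega$ and $p_k \to p$ with $p_k \in \partial u_k(x_k)$, then $p \in \partial u(x)$. This is immediate by passing to the limit in $u_k(y) \geq u_k(x_k) + p_k \cdot (y - x_k)$ for each fixed $y$, using the uniform convergence of $u_k$ to $u$.

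For the compact upper bound, fix a compact $K \subset \Omega$. Uniformly convergent convex functions are uniformly Lipschitz on compact subsets of $\Omega$, so the sets $\partial u_k(K)$ lie in a common bounded set for $k$ large. The closedness property forces $\partial u_k(K) \subset N_\epsilon(\partial u(K))$ for all sufficiently large $k$ (otherwise extraction produces a contradiction). Since $\partial u(K)$ is itself closed, by the same closedness property applied with $u_k \equiv u$, one has $|N_\epsilon(\partial u(K))| \downarrow |\partial u(K)| = Mu(K)$ as $\epsilon \to 0$, yielding $\limsup_k Mu_k(K) \leq Mu(K)$.

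For the open lower bound, fix open $U \subset \Omega$ with $\overline{U} \subset \Omega$ and a compact $K \subset U$; choose an intermediate compact $K'$ with $K \subset \text{int}(K') \subset K' \subset U$. By Proposition \ref{KeyFact}, for almost every $p \in \partial u(K)$ there is a \emph{unique} $x(p) \in \Omega$ with $p \in \partial u(x(p))$, and this point necessarily lies in $K$. For any such $p$, the function $w(y) := u(y) - p \cdot y$ attains its global minimum on $\Omega$ uniquely at $x(p)$. Since $w_k(y) := u_k(y) - p \cdot y$ converges uniformly to $w$ on $K'$, any minimizer $x_k$ of $w_k$ on $K'$ satisfies $x_k \to x(p) \in \text{int}(K')$ and is therefore an interior minimum of $w_k$ on $K'$ for $k$ large, which gives $p \in \partial u_k(x_k) \subset \partial u_k(U)$. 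Fatou's lemma applied to $\mathbf{1}_{\partial u_k(U)}$ on the set $\partial u(K)$ then yields $Mu(K) \leq \liminf_k Mu_k(U)$; letting $K$ exhaust $U$ from inside via inner regularity of Lebesgue measure completes the argument.

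The main obstacle is the open lower bound: one must produce subgradients of $u_k$ that converge to prescribed subgradients of $u$, and this can fail when several points of $\Omega$ share the common $p$ (then the minimizer $x_k$ of $w_k$ on $K'$ might drift to $\partial K'$, breaking the interior-minimum conclusion). Proposition \ref{KeyFact} is exactly what rules out this degeneracy for almost every $p$ and makes the compactness-of-minimizers step succeed.
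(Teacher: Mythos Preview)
Your proposal is correct and follows essentially the same approach as the paper's sketch: the paper only indicates the key step for the lower bound (for a.e.\ $p \in \partial u(x_0)$ the supporting hyperplane touches only at $x_0$, so by uniform convergence $p \in \partial u_k(x_k)$ for some $x_k \to x_0$, and Proposition~\ref{KeyFact} disposes of the exceptional $p$), and defers the details to \cite{F}. Your argument is precisely the standard fleshing-out of this idea, together with the easy closedness/upper-semicontinuity step for compacts that the paper does not bother to mention.
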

\noindent Roughly, if $p \in \partial u(x_0)$ and the supporting plane of slope $p$ touches only over $x_0$, then it is geometrically clear that $p \in \partial u_k(x_k)$ with $x_k \rightarrow x_0$. By the key fact
Proposition \ref{KeyFact} we may ignore the remaining sub-gradients. For a detailed proof see \cite{F}, Proposition $2.6$.

As a consequence of Propositions \ref{AlexandrovMaxPrinciple} and \ref{Closedness} we have the compactness of solutions with fixed linear boundary data:

\begin{prop}\label{Compactness}
For a bounded convex domain $\Omega$, the collection of functions $$\mathcal{A} = \{v \text{ convex on } \Omega,\, v|_{\partial \Omega} = 0, \, Mv(\Omega) \leq C_0\}$$ is compact. That is, any sequence in $\mathcal{A}$ has a uniformly convergent subsequence whose Monge-Amp\`{e}re measures converge weakly to that of the limit.
\end{prop}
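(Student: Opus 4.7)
The plan is to combine the uniform modulus of continuity from the Alexandrov maximum principle (Proposition \ref{AlexandrovMaxPrinciple}) with the interior Lipschitz bound that convexity provides, and then invoke the closedness of Alexandrov solutions under uniform convergence (Proposition \ref{Closedness}).

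First I would produce a uniform $L^\infty$ bound and boundary modulus of continuity for the family $\mathcal{A}$. By Proposition \ref{AlexandrovMaxPrinciple}, every $v \in \mathcal{A}$ satisfies
$$|v(x)| \leq C(n,\,\text{diam}(\Omega),\,C_0)\,\text{dist}(x,\,\partial\Omega)^{1/n},$$
so in particular $\|v\|_{L^\infty(\Omega)} \leq M$ for a constant $M$ independent of $v$. Next, on any compact set $K \Subset \Omega$, convex functions bounded uniformly by $M$ are automatically Lipschitz with a constant depending only on $M$ and $\text{dist}(K,\partial\Omega)$ (a standard fact: the sub-gradients of a convex function at a point $x$ with $B_r(x)\subset\Omega$ are bounded in norm by $\mathrm{osc}_{B_r(x)}v/r \leq 2M/r$). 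Hence $\mathcal{A}$ restricted to $K$ is equicontinuous and uniformly bounded.

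Given a sequence $\{v_k\} \subset \mathcal{A}$, I would apply Arzelà--Ascoli and a diagonal argument along a compact exhaustion $K_j \Subset \Omega$ to extract a subsequence (still denoted $v_k$) converging locally uniformly in $\Omega$ to some function $v$. Being a local uniform limit of convex functions, $v$ is convex. To upgrade to uniform convergence on all of $\overline{\Omega}$ and to verify $v|_{\partial\Omega}=0$, I would use the uniform boundary estimate above: for each $\varepsilon>0$, on the boundary strip $\{\text{dist}(x,\partial\Omega)<\delta\}$ with $\delta$ small we have $|v_k|<\varepsilon/2$ uniformly in $k$, and the same bound passes to the limit $v$. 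Combined with local uniform convergence on the complementary compact set, this gives $v_k \to v$ uniformly on $\overline{\Omega}$ and $v=0$ on $\partial\Omega$.

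Finally, weak convergence of Monge-Ampère measures is immediate from Proposition \ref{Closedness}, and the mass bound $Mv(\Omega)\leq C_0$ passes to the limit by weak lower semicontinuity on open sets (and can also be controlled by testing against a suitable cutoff). The main obstacle is the boundary behavior: local uniform convergence of convex functions on the interior is essentially free, but ensuring that the boundary values are preserved in the limit is precisely the point where one must exploit the quantitative $C^{1/n}$ boundary modulus from Proposition \ref{AlexandrovMaxPrinciple} rather than convexity alone.
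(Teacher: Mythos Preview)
Your proposal is correct and is exactly the natural fleshing-out of what the paper indicates: it merely asserts that compactness follows from Propositions~\ref{AlexandrovMaxPrinciple} and~\ref{Closedness}, and you have supplied the standard Arzel\`a--Ascoli argument (uniform $C^{1/n}$ boundary modulus plus interior Lipschitz bounds from convexity) together with the weak-convergence statement.
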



\subsection{Dirichlet Problem}
We conclude the section by discussing the Dirichlet problem.

\begin{thm}
Let $\Omega \subset \mathbb{R}^n$ be a bounded strictly convex domain, $\mu$ a bounded Borel measure on $\Omega$, and $\varphi \in C(\partial \Omega)$. Then there exists a unique Alexandrov solution in $C(\overline{\Omega})$ to
the Dirichlet problem
$$\begin{cases}
\det D^2u = \mu \text{ in } \Omega, \\
u|_{\partial \Omega} = \varphi.
\end{cases}
$$
\end{thm}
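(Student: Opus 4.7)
Uniqueness is immediate from the comparison principle (Proposition~\ref{ComparisonPrinciple}): if $u_1, u_2$ are two Alexandrov solutions with $u_1|_{\partial\Omega} = u_2|_{\partial\Omega} = \varphi$, applying Proposition~\ref{ComparisonPrinciple} with the two orderings of $(u_1, u_2)$ (both allowable since $Mu_1 = Mu_2 = \mu$) forces $u_1 \equiv u_2$.

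For existence I would combine Perron's method with discrete approximation. Begin with the case $\varphi \equiv 0$ and $\mu = \sum_{i=1}^{N} c_i \delta_{x_i}$ a finite sum of Dirac masses, and consider the envelope
\[
u(x) \,=\, \sup\bigl\{v(x) : v \text{ convex on } \Omega,\ v \leq 0 \text{ on } \partial\Omega,\ Mv \geq \mu\bigr\}.
\]
The admissible class is non-empty (a sufficiently deep upper envelope of cones with vertices $x_i$, tuned so each carries subdifferential mass $\geq c_i$, is a subsolution), and Proposition~\ref{AlexandrovMaxPrinciple} provides a uniform lower bound depending only on $\mathrm{diam}(\Omega)$ and $\mu(\Omega)$. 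Hence $u$ is well-defined, convex, with $u|_{\partial\Omega} \leq 0$. A standard Perron-type argument, using Proposition~\ref{Closedness} to pass to the limit in any local ``lifting'', shows $Mu = \mu$; the cone subsolutions, combined with Proposition~\ref{AlexandrovMaxPrinciple}, force $u \to 0$ on $\partial\Omega$. For a general bounded Borel $\mu$, approximate by discrete $\mu_k \rightharpoonup \mu$ (for instance, assign mass $\mu(Q)$ to the center of each cube $Q$ in a dyadic partition of $\Omega$), solve the associated Dirichlet problems to get $u_k$, and use Propositions~\ref{Compactness} and~\ref{Closedness} to extract a uniform subsequential limit $u$ satisfying $Mu = \mu$.

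To pass to general $\varphi \in C(\partial\Omega)$, repeat the envelope construction with the boundary constraint $v \leq \varphi$; the interior analysis is unaffected. \emph{The main obstacle} is showing that $u$ attains $\varphi$ continuously on $\partial\Omega$, and this is precisely where strict convexity of $\Omega$ enters. At each $x_0 \in \partial\Omega$, strict convexity yields a convex ``paraboloid barrier''---for example, the restriction to $\overline{\Omega}$ of $|x - x_B|^2 - R^2$ for an outer tangent ball $B_R(x_B)$ at $x_0$---vanishing at $x_0$ and strictly negative on $\overline{\Omega} \setminus \{x_0\}$. Combined with the continuity of $\varphi$, this produces convex subsolutions $\underline{u}$ with $\underline{u} \leq \varphi$ on $\partial\Omega$ and $\underline{u}(x_0)$ arbitrarily close to $\varphi(x_0)$, yielding $\liminf_{x\to x_0} u(x) \geq \varphi(x_0)$; a dual construction (an affine majorant of $\varphi$ on $\partial\Omega$ equal to $\varphi(x_0)$ at $x_0$, corrected by the paraboloid if necessary to handle the continuity of $\varphi$) supplies the matching upper barrier and forces $\lim_{x \to x_0} u(x) = \varphi(x_0)$. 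Strict convexity is essential here: along a flat boundary piece the lower barrier collapses, and one can exhibit continuous $\varphi$ whose Alexandrov solution does not attain it.
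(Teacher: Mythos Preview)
Your strategy is a legitimate alternative to the paper's and differs at two points. For the discrete measure $\mu=\sum_i \alpha_i\delta_{x_i}$ with zero boundary data, the paper works on the \emph{super}solution side: it minimizes $\sum_i P(x_i)$ over polyhedral graphs with $MP\le\mu$, then shows the minimizer solves $MP=\mu$ by a vertex-lowering argument. You take the dual route, the Perron supremum over subsolutions. For general $\varphi$, the paper approximates $\Omega$ by inscribed polytopes with piecewise-affine boundary data (strict convexity guaranteeing that every sampled boundary point is an extreme point of the polytope) and passes to the limit; you build pointwise barriers instead. Both routes are standard, and your identification of where strict convexity enters is correct.

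There is, however, a real gap in the barrier you propose. A bounded strictly convex domain need not admit an enclosing ball tangent at a prescribed boundary point: for $\Omega=\{x_1^4+x_2^4<1\}$ and $x_0=(1,0)$ the boundary separates from its tangent line only to fourth order, and one checks that no ball through $x_0$ contains $\Omega$. Strict convexity only hands you the supporting hyperplane, and that is already enough: the affine function $\ell(x)=(x-x_0)\cdot\nu$ is strictly negative on $\overline\Omega\setminus\{x_0\}$, so for the upper barrier take $\varphi(x_0)+\epsilon-K\ell$ with $K$ large (affine, hence Monge--Amp\`ere mass zero), and for the lower barrier take $\varphi(x_0)-\epsilon+K\ell$ plus the solution $w$ of $Mw=\mu$, $w|_{\partial\Omega}=0$ already built in the first step. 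A smaller imprecision: your ``upper envelope of cones with vertices $x_i$'' is not obviously a subsolution, since taking the maximum can erase the corner at $x_j$ wherever another cone dominates there; the safe choice is the convex hull of $(\partial\Omega\times\{0\})\cup\{(x_i,-h_i)\}$ with the $h_i$ large, which is exactly the polyhedral object the paper manipulates.
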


\begin{proof}[{\bf Sketch of Proof:}]
Uniqueness follows from the comparison principle Proposition \ref{ComparisonPrinciple}. 
For existence, to emphasize ideas we treat the case that $\varphi = 0$ and $\Omega$ is a polyhedron. We note that $\mu$ is weakly approximated by finite sums of Dirac masses, $\sum_{i = 1}^N \alpha_i \delta_{x_i}$. 
By the compactness result Proposition \ref{Compactness}, it suffices to consider this case.

Let $\mathcal{F}$ be the family of convex polyhedral graphs $P$ in $\mathbb{R}^{n+1}$ that contain $(\partial \Omega,\, 0) \subset \mathbb{R}^n \times \mathbb{R}$, with remaining vertices that project to a subset of $\{x_i\}_{i = 1}^N$. Let 
$\mathcal{F}' \subset \mathcal{F}$ consist of those $P$ satisfying $MP \leq \sum_{i = 1}^N \alpha_i \delta_{x_i}$. The family $\mathcal{F}'$ is non-empty ($0$ is a trivial example) and compact by Proposition \ref{Compactness}.

For $P \in \mathcal{F}$ we let $\phi(P) = \sum_{i = 1}^N P(x_i)$. The functional $\phi$ is bounded below on $\mathcal{F}'$ by the Alexandrov maximum principle. By compactness
there exists a minimizer $u$ of $\phi$ in $\mathcal{F}'$. We claim that $u$ solves $Mu = \sum_{i = 1}^N \alpha_i \delta_{x_i}$. If not, then after re-labeling we have $Mu(\{x_1\}) < \alpha_1$. By moving the vertex
$(x_1,\, u(x_1))$ a tiny bit downwards and taking the convex hull of this point with the remaining vertices, we obtain another function in $\mathcal{F}'$ that is smaller than $u$, a contradiction.

\vspace{3mm}

The case that $\Omega$ is a polyhedron and $\varphi$ is affine on each face of $\partial \Omega$ is treated similarly, with $\mathcal{F},\, \mathcal{F}'$ consisting of convex polyhedral graphs with vertices over $\{x_i\}_{i = 1}^N$ and
$\varphi$ as boundary data. To show that $\mathcal{F}'$ is non-empty, use instead the convex hull of the graph of $\varphi$ in $\mathbb{R}^{n+1}$.

\vspace{3mm}

Finally, for the general case, approximate $\Omega$ with the convex hulls of finite subsets of $\partial \Omega$ with finer and finer mesh, approximate $\varphi$ by data which are affine on the faces
of these polyhedra, solve these problems, and take a limit. We refer the reader to \cite{CY} for details.
\end{proof}

\begin{rmk}
When $\varphi$ is linear, we don't require strict convexity of $\partial \Omega$.
The strict convexity is necessary for general $\varphi$ since no convex function can continuously attain e.g. the boundary data $-|x|^2$ when $\partial \Omega$ has flat pieces.

The strict convexity of $\partial \Omega$ is used in the last step. It guarantees that for any subset $\{y_i\}_{i = 1}^M$ of $\partial \Omega$, each $y_k$ is a vertex of the convex hull of $\{y_i\}_{i = 1}^M$.

Closely related is the fact that when $\partial \Omega$ is strictly convex, the convex envelope of the graph of $\varphi$ (that is, the supremum of linear functions beneath the graph) continuously achieves the boundary data,
and has $0$ Monge-Amp\`{e}re measure. (This is a good exercise. For the second part, recall the key fact Proposition \ref{KeyFact}).
\end{rmk}

\newpage
\section{Interior Regularity}
In this section we discuss the interior regularity problem for the important case $f = 1$.


\subsection{Structure of the Equation}
We start by listing some important structural properties of the equation
\begin{equation}\label{Equation}
\det D^2u = 1.
\end{equation}

One can view this equation as a differential inclusion which says that $D^2u$ lies in the hypersurface of positive, determinant $1$ matrices. A useful playground
for investigating matrix geometry is $\text{Sym}_{2 \times 2} \cong \mathbb{R}^3$, by identifying $I$ with $e_3$, and the traceless matrices with the subspace $\{x_3 = 0\}$. One can check that
the surfaces of determinant $c_0$ are the hyperboloid sheets
$$x_3^2 = x_1^2 + x_2^2 + c_0.$$
In particular, the cone of positive matrices is $\{x_3^2 > x_1^2 + x_2^2\}$, and the surfaces of constant determinant in this cone are convex.

\vspace{3mm}

If we parametrize the level surfaces of $\det$ correctly, we obtain a concave function on the positive matrices. For $M,\, N \in \text{Sym}_{n \times n}$ with $M > 0$, it is a good exercise to derive the expansion
\begin{equation}\label{logdetExpansion}
\log\det(M + \epsilon N) = \log\det(M) + \epsilon M^{ij}N_{ij} - \frac{\epsilon^2}{2} M^{ik}M^{jl}N_{ij}N_{kl} + O(\epsilon^3).
\end{equation}
Here $M^{ij} = (M^{-1})_{ij}$, and repeated indices are summed. The first-order term is positive when $N \geq 0$, and the second-order term is negative for any $N$.
We conclude that $\log\det$ is elliptic and concave on the the positive symmetric matrices. Furthermore, it is uniformly elliptic when restricted to bounded regions on the surface of positive, determinant $1$ matrices. 
To be precise, for any $C_0 > 1$ there exists a concave, uniformly elliptic extension of $\log\det$ from $\{M > 0,\, \det M = 1,\, |M| < C_0\}$ to $\text{Sym}_{n \times n}$. (The ellipticity constants depend on $C_0$).
Thus, solutions to (\ref{Equation}) solve a concave, uniformly elliptic equation provided $|D^2u|$ is bounded.

\vspace{3mm}

The landmark result for such equations, due to Evans \cite{E} and Krylov \cite{Kr}, is:
\begin{thm}\label{EvansKrylov}
Assume $F(D^2w) = 0$ in $B_1 \subset \mathbb{R}^n$, where $F$ is uniformly elliptic and concave. Then
$$\|w\|_{C^{2,\,\alpha}(B_{1/2})} \leq C(n,\, F)\|w\|_{L^{\infty}(B_1)}.$$
\end{thm}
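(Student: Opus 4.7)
The plan is to prove $C^{2,\alpha}$ by combining two distinct uses of the concavity of $F$ with the Krylov-Safonov weak Harnack inequality, following Evans and Krylov. The output will be a geometric decay of the oscillation of $D^2 w$ on dyadic balls.

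First reduce to the case that $\|D^2 w\|_{L^\infty(B_{3/4})}$ is bounded in terms of $\|w\|_{L^\infty(B_1)}$; this $C^{1,1}$ bound is a separate, classical step (e.g. via the Bernstein technique or a Pogorelov-type argument). Under this bound, for every unit vector $e$ I differentiate $F(D^2 w) = 0$ twice in direction $e$, obtaining
\begin{equation*}
F^{ij}(D^2 w)\,\partial_{ij}(w_{ee}) \;=\; -F^{ij,kl}(D^2 w)\,\partial_e w_{ij}\,\partial_e w_{kl} \;\geq\; 0,
\end{equation*}
where the inequality uses concavity of $F$ (as in the second-order term of (\ref{logdetExpansion})). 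Thus $w_{ee}$ is a subsolution of the linear operator $L := F^{ij}(D^2 w)\,\partial_{ij}$, which is uniformly elliptic with bounded measurable coefficients.

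A second use of concavity, applied pointwise between $x,y \in B_{1/2}$, gives, since $F(D^2 w(x)) = F(D^2 w(y)) = 0$,
\begin{equation*}
F^{ij}(D^2 w(x))\bigl[w_{ij}(y) - w_{ij}(x)\bigr] \;\geq\; 0.
\end{equation*}
After diagonalizing $F^{ij}(D^2 w(x))$, this says $\sum_k \lambda_k(x)\bigl[w_{\bar e_k \bar e_k}(y) - w_{\bar e_k \bar e_k}(x)\bigr] \geq 0$ with positive weights $\lambda_k(x)$, coupling the oscillations of $D^2 w$ in different directions. For any direction $e$ and concentric balls $B_r \subset B_{2r} \subset B_{1/2}$, setting $M^e_r := \sup_{B_r} w_{ee}$, the function $M^e_{2r} - w_{ee}$ is a nonnegative $L$-supersolution, and Krylov-Safonov's weak Harnack inequality yields
\begin{equation*}
\Bigl(|B_r|^{-1}\int_{B_r}(M^e_{2r} - w_{ee})^p\,dx\Bigr)^{1/p} \;\leq\; C\bigl(M^e_{2r} - M^e_r\bigr).
\end{equation*}

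The core step is to combine these one-sided $L^p$-bounds for a finite family $\{e_k\}$ of directions, chosen dense enough on the sphere and adapted to the eigendirections of $F^{ij}(D^2 w)$ at a well-chosen base point $x$, with the pointwise concavity inequality above. This converts the one-sided (upper) weak-Harnack control for each $w_{e_k e_k}$ into two-sided oscillation decay, yielding $\omega(r) := \sup_{|e|=1}\mathrm{osc}_{B_r} w_{ee} \leq \delta\,\omega(2r)$ with $\delta = \delta(n,F) < 1$. Iterating on dyadic balls gives $\omega(r) \leq C r^\alpha$, i.e. the $C^{2,\alpha}$ estimate. The main obstacle is precisely this combination step: the weak Harnack controls each $w_{ee}$ only from one side (since $w_{ee}$ is a subsolution, not a supersolution), so concavity is essential to recover the other side, and one must make this work in $L^p$-average rather than pointwise, while handling the fact that the eigendirections of $F^{ij}(D^2 w)$ vary with $x$. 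This coupling is the technical heart of Evans-Krylov.
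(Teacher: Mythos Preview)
The paper does not prove Theorem~\ref{EvansKrylov}; it is stated as a landmark result, accompanied only by the heuristic Remark that follows it and citations to \cite{GT}, Chapter~17 and \cite{CC}, Chapter~6. Your proposal is a correct and considerably more detailed expansion of exactly that heuristic: pure second derivatives $w_{ee}$ are subsolutions of the linearized operator by concavity, the pointwise inequality $F^{ij}(D^2w(x))[w_{ij}(y)-w_{ij}(x)]\geq 0$ supplies the ``supersolution-like'' coupling the Remark alludes to (``a pure second derivative is a negative combination of others''), and the Krylov--Safonov weak Harnack inequality closes the oscillation-decay iteration. This is the classical Evans argument as presented in the references the paper cites, so your sketch is fully in line with what the paper invokes.

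One minor point: for a general concave uniformly elliptic $F$, the preliminary interior $C^{1,1}$ bound is not really a ``Pogorelov-type'' argument (that device, with the cutoff $|u|$, is specific to Monge--Amp\`ere). The standard route is either the local maximum principle applied to the subsolutions $w_{ee}$ combined with an integral control of $D^2w$, or Caffarelli's viscosity-solution perturbation method in \cite{CC}, Chapter~6, which bypasses the issue entirely. This does not affect the validity of your outline, since you correctly flagged the $C^{1,1}$ step as a separate classical reduction.
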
 
\noindent The dependence of $C$ on $F$ is through the ellipticity constants of $F$.

\begin{rmk}
The Evans-Krylov theorem can be understood heuristically as follows.
The concavity of $F$ guarantees that the pure second derivatives of a solution
are sub-solutions to the linearized equation. They also behave like super-solutions by ellipticity (a pure second derivative is a ``negative combination" of others). 
Finally, a fundamental result of Krylov and Safonov (see \cite{CC}) says that solutions to linear non-divergence uniformly elliptic equations with bounded measurable coefficients are $C^{\alpha}$. Good references for the Evans-Krylov theory include
\cite{GT}, Chapter $17$ and \cite{CC}, Chapter $6$.
\end{rmk}

Let $e$ be a unit vector. Applying the formula (\ref{logdetExpansion}) to $D^2(u + \epsilon e) = D^2u(x) + \epsilon D^2u_{e}(x) + \frac{\epsilon^2}{2}D^2u_{ee} +  O(\epsilon^3)$
we obtain the once- and twice- differentiated equations
\begin{equation}\label{DifferentiatedEquations}
u^{ij}u_{eij} = 0, \quad u^{ij}u_{eeij} = u^{ik}u^{jl}u_{eij}u_{ekl}.
\end{equation}
\noindent Here $u^{ij}$ are the components of $(D^2u)^{-1}$, and subscripts denote derivatives. By combining Theorem \ref{EvansKrylov} with Schauder theory for the once-differentiated equation, we conclude that solutions to (\ref{Equation}) satisfy
$$\|u\|_{C^k(B_{1/2})} \leq C(n,\,k,\, \|u\|_{C^{1,\,1}(B_1)})$$
for all $k \geq 0$. Thus, the key to regularity is to bound $|D^2u|$.

\vspace{3mm}

The last important structural property of (\ref{Equation}) is the affine invariance:
\begin{equation}\label{AffineInvar}
 \tilde{u}(x) = |\det A|^{-2/n}u(Ax)
\end{equation}
also solves (\ref{Equation}), for any invertible affine transformation $A$. This distinguishes the character of the Monge-Amp\`{e}re equation from e.g. the minimal surface or Laplace equations.
Since for any $x_0$ we have $D^2\tilde{u}(x_0) = I$ after some affine transformation of determinant $1$, the equation (\ref{Equation}) can be viewed as the affine-invariant Laplace equation.

\subsection{Calabi's $C^3$ Estimate}
Remarkably, Calabi reduced the regularity problem for (\ref{Equation}) to second derivative estimates well before the Evans-Krylov breakthrough. He proved the following $C^3$ estimate \cite{Cal}:
\begin{thm}\label{Calabi}
Assume that $u \in C^5(B_1)$ solves (\ref{Equation}). Then
$$\|u\|_{C^3(B_{1/2})} \leq C(n,\, \|u\|_{C^2(B_1)}).$$
\end{thm}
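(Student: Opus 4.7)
The hypothesis $\|u\|_{C^2(B_1)} \le C_0$ combined with $\det D^2u = 1$ pinches the eigenvalues of $D^2u$ both above (by $C_0$) and below (away from $0$), so the linearized operator $Lw := u^{ij} w_{ij}$ is uniformly elliptic with bounded coefficients on $B_1$. My plan is to run a maximum-principle argument on a carefully chosen affine-invariant third-derivative quantity, exploiting the differentiated equations (\ref{DifferentiatedEquations}) to produce a favorable differential inequality.

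The quantity to use is the squared norm of $D^3u$ measured in the metric $g_{ij} = u_{ij}$,
$$S := u^{ij}\, u^{kl}\, u^{pq}\, u_{ikp}\, u_{jlq}.$$
Under the eigenvalue pinching above, $S$ is comparable to $|D^3u|^2$, so a pointwise interior bound on $S$ is equivalent to the desired $C^3$ estimate. The choice is motivated by the affine invariance (\ref{AffineInvar}): $S$ is a scalar under unimodular affine changes of variable, so the structural identities of the equation act on it cleanly.

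Next I would compute $LS$. Differentiating $(D^2u)^{-1}$ via $\partial_m u^{ij} = -u^{ia} u^{jb} u_{abm}$ and expanding, one writes $LS$ as a sum of contractions of third and fourth derivatives of $u$. The first-differentiated identity $u^{ij} u_{ijk} = 0$ kills the ``pure trace'' pieces, while the twice-differentiated identity $u^{ij} u_{eeij} = u^{ik}u^{jl} u_{eij} u_{ekl}$ feeds in the positive quartic-in-$u_{ijk}$ contribution needed to control the leading terms. The technical heart of the proof --- and the main obstacle --- is to rearrange the resulting sixth-order expression, using Cauchy--Schwarz in the metric $g$, into the clean form
$$LS \ \geq\ c(n)\, S^2.$$
The favorable sign here is the essence of Calabi's insight; it is not visible term by term and requires the right tensorial grouping.

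The proof concludes via a standard cutoff. Fix $\eta \in C_c^\infty(B_1)$ with $\eta \equiv 1$ on $B_{1/2}$ and $0 \le \eta \le 1$, and set $\Phi := \eta^2 S$. At an interior maximum $x_0$ of $\Phi$ one has $\nabla\Phi = 0$, which gives $\eta\,\nabla S = -2S\,\nabla\eta$, and $L\Phi \le 0$. Substituting into the expansion of $L(\eta^2 S)$, the mixed $\nabla\eta\cdot\nabla S$ terms combine with the $S\,L(\eta^2)$ pieces into a bounded multiple of $S$, yielding $\eta^2 LS \le C(n,C_0)\, S$ at $x_0$. Combining with $LS \ge cS^2$ forces $\eta^2 S \le C$ at $x_0$, and hence on all of $B_1$; restricting to $B_{1/2}$ where $\eta \equiv 1$ produces the desired $C^3$ bound.
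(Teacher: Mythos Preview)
Your proposal is correct and follows the paper's approach: your quantity $S$ is exactly Calabi's $R$, and the target inequality $LS \geq c(n)S^2$ is Proposition~\ref{DifferentialCalabi} (with $c = 1/(2n)$), whose detailed tensorial computation---the ``main obstacle'' you flag---the paper carries out in full. The one minor difference is in the localization step: the paper compares $R$ with the explicit super-solution $K(1-2|x|^2)^{-2}$, which blows up on $\partial B_{1/\sqrt{2}}$, rather than using a cutoff $\eta^2 S$; both devices are standard and yield the same conclusion.
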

\noindent Calabi's approach is to derive a second-order differential inequality for the quantity
$$R := u^{kp}u^{lq}u^{mr}u_{klm}u_{pqr}.$$
This quantity is in fact the scalar curvature of the metric $g = u_{ij}$. He showed:
\begin{prop}\label{DifferentialCalabi}
The quantity $R$ satisfies
\begin{equation}\label{GoodEquation}
u^{ij}R_{ij} \geq \frac{1}{2n}R^2.
\end{equation}
\end{prop}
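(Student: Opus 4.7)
By the affine invariance (\ref{AffineInvar}), it suffices to prove (\ref{GoodEquation}) pointwise; choose $x_0$ and, after a linear change of coordinates of determinant $1$, normalize so that $u_{ij}(x_0) = \delta_{ij}$. Write $A_{klm} := u_{klm}(x_0)$, so that $R(x_0) = |A|^2 := \sum_{k,l,m} A_{klm}^2$ and $u^{ij}R_{ij}(x_0) = \Delta R(x_0)$. In these coordinates the differentiated equations (\ref{DifferentiatedEquations}) read
\begin{equation}\label{MANormalCoords}
\sum_i A_{iik} = 0 \text{ for each } k, \qquad \sum_i u_{iiee}(x_0) = \sum_{i,j} A_{eij}^2 \text{ for each unit } e.
\end{equation}

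The strategy is to compute $R_{ij}$ by differentiating $R$ twice via the product rule, using the identity $(u^{ij})_k = -u^{ia}u^{jb}u_{abk}$ and its first derivative, and then to contract with $u^{ij} = \delta_{ij}$. After this contraction the output splits into three groups: a ``plain'' fourth-derivative piece $2\sum_{i,k,l,m} u_{iiklm}\, A_{klm}$, coming from the Hessian of the product $u_{klm}u_{pqr}$; cross-terms of schematic form $A \cdot u_{****} \cdot A$, arising when one $u^{**}$ and one $u_{klm}$ are simultaneously differentiated; and purely quartic-in-$A$ contributions, coming from the Hessian of a single $u^{**}$ or from pairing first derivatives of two different $u^{**}$'s.

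The fourth-derivative terms can be controlled by differentiating the once-differentiated equation a second time, which gives $\sum_i u_{eiik}(x_0) = \sum_{a,b} A_{abk}A_{eab}$; differentiating once more yields an analogous expression for $\sum_i u_{iiklm}(x_0)$. Substituting into the $u_{iiklm}A_{klm}$ term and applying (\ref{MANormalCoords}), the fourth-derivative pieces combine against the cross-terms to produce a manifestly non-negative quadratic expression in the fourth derivatives (essentially $\sum_{i,k,l,m} u_{iklm}^2$, the squared norm of $\nabla^4 u$ in the metric $u$), which may be discarded. What remains is a quartic polynomial $Q(A)$ in the third derivatives alone, and (\ref{GoodEquation}) reduces to the algebraic statement that $Q(A) \geq \frac{1}{2n}|A|^4$ for every totally symmetric three-tensor $A$ satisfying the trace-free constraint in (\ref{MANormalCoords}).

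The final step is this algebraic inequality: $Q(A)$ is reorganized as a sum of squared bilinear contractions of $A$ with itself, and Cauchy-Schwarz over the $n$ trace indices, in the presence of $\sum_i A_{iik}=0$, produces the dimensional factor $1/(2n)$. I expect the main obstacle to be the tensor bookkeeping needed to identify the cancellation between the fourth-derivative piece and the cross-terms, since the non-negativity of what remains is not visible from the raw expansion of $\Delta R$; the identity $\sum_i u_{eiik} = \sum A_{abk}A_{eab}$ is the crucial bridge that converts an indefinite fourth-order expression into a sum of squares plus the quartic polynomial $Q(A)$. Once that reduction is in hand, the inequality $Q(A) \geq \frac{1}{2n}|A|^4$ is elementary and pins down the constant.
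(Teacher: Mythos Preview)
Your outline follows the paper's route: normalize so $D^2u(x_0)=I$, expand $\Delta R$ by the product rule, replace the fifth-derivative piece $2u_{klmii}A_{klm}$ using the thrice-differentiated equation, and reduce to a quartic inequality in $A=D^3u(x_0)$. However, your description of the middle step is not right. After all substitutions one arrives at
\[
\Delta R(x_0)\;=\;2|D^4u|^2 \;-\;6\,u_{ijkl}A_{pij}A_{pkl}\;+\;3\mathcal A\;+\;2\mathcal B,
\]
with $\mathcal A=\sum_{k,p}\bigl(\sum_{i,j}A_{kij}A_{pij}\bigr)^2$ and $\mathcal B=\sum A_{ikp}A_{ilq}A_{mkl}A_{mpq}$. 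The cross term $-6\,u_{ijkl}A_{pij}A_{pkl}$ does \emph{not} combine with $2|D^4u|^2$ to give something ``essentially $\sum u_{iklm}^2$'' that can simply be discarded; the sum of those two is indefinite. The actual mechanism is a completion of the square,
\[
2\Bigl(u_{ijkl}-\tfrac12 A_{p[ij}A_{k]lp}\Bigr)^2\;=\;2|D^4u|^2-6\,u_{ijkl}A_{pij}A_{pkl}+\tfrac32\mathcal A+3\mathcal B\;\ge\;0,
\]
which absorbs the cross term at the cost of $\tfrac32\mathcal A+3\mathcal B$ from the quartic pile. So your leftover $Q(A)$ is not the full $3\mathcal A+2\mathcal B$ but only $\tfrac32\mathcal A-\mathcal B$, and identifying this square is the heart of the argument, not mere bookkeeping.

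For the final step, $\tfrac32\mathcal A-\mathcal B\ge\tfrac{1}{2n}|A|^4$ is indeed elementary, but the trace-free constraint $\sum_i A_{iik}=0$ is not what produces the $1/n$: that comes from Cauchy--Schwarz on the diagonal $k=p$ terms of $\mathcal A$, giving $\mathcal A\ge\frac1n R^2$ with no constraint needed. A separate sum-of-squares identity shows $\mathcal A\ge\mathcal B$.
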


\noindent The quadratic nonlinearity on the right side of inequality (\ref{GoodEquation}) is powerful. ODE intuition indicates that if $R(0)$ is very large, then it must blow up close to $0$, where 
closeness is measured with respect to the metric $g = u_{ij}$. Theorem \ref{Calabi} follows from Proposition \ref{DifferentialCalabi} by observing that if $|D^2u| < C_0I$ in $B_1$, then
$K(1-2|x|^2)^{-2}$ is a super-solution to (\ref{GoodEquation}) for $K$ large depending on $n$ and $C_0$, and blows up on $\partial B_{1/\sqrt{2}}$.

\vspace{3mm}

We now prove Proposition \ref{DifferentialCalabi}.
\begin{proof}[{\bf Proof of Proposition \ref{DifferentialCalabi}:}]
It suffices to derive the inequality at $x = 0$. To simplify computations, we observe that under affine rescalings $u \rightarrow u(Ax)$, the quantity $R$ remains invariant (that is, $R$ becomes $R(Ax)$). 
We may thus assume that $D^2u(0) = I$.

\vspace{2mm}

Since $R$ is a third-order quantity, we differentiate the equation (\ref{Equation}) three times. Evaluating at $x = 0$ we obtain
\begin{equation}\label{DifferentiatedEquation}
u_{kii} = 0, \quad u_{klii} = u_{kij}u_{lij}, \quad u_{klmii} = u_{ij[kl}u_{m]ij} - 2 u_{kij}u_{pil}u_{pjm}.
\end{equation}
Here repeated indices are summed, and the brackets indicate a sum of three terms obtained by cyclically permuting the indices.

\vspace{2mm}

Now we compute $R_{ii}(0)$. When both derivatives hit the same third-order term in $R$, we get $2u_{klmii}u_{klm}$. Using the thee times-differentiated equation, we get
$$(I) := 6u_{ijkl}u_{mij}u_{mkl} - 4u_{kij}u_{klm}u_{pil}u_{pjm}.$$
The first term comes from the cyclic sum, and the symmetry of $u_{klm}$.
Since we have no information on the sign of this expression, we view this as a ``bad term."

When the derivatives hit different third-order terms in $R$ we get
$$(II) := 2u_{ijkl}^2 = 2|D^4u|^2,$$
which is a ``good term" coming from the fact that $R$ is a quadratic function of third derivatives.

We now consider the case that both derivatives hit the same $D^2u^{-1}$ term. Using that $\partial_i^2(u^{kp}) = -u_{kpii} + 2u_{iak}u_{iap}$ and applying the twice-differentiated equation we obtain
$$(III) := 3u_{kij}u_{pij}u_{klm}u_{plm} := 3A \geq 0.$$
This is a ``good term" coming from the structure of the equation.

When the derivatives hit different $D^2u^{-1}$ terms in $R$ we get
$$(IV) := 6u_{ikp}u_{ilq}u_{mkl}u_{mpq} := 6B.$$
Note that this has the same form as the second term in $(I)$.

Finally, when one derivative hits a $D^2u^{-1}$ term and the other a third-order term in $R$, we get
$$(V) := -12u_{iklm}u_{pik}u_{plm}.$$
Note that this has the same form as the first term in $(I)$.

Summing $(I),\, (II),\,(III),\,(IV)$ and $(V)$ we obtain
$$\Delta R(0) = -6u_{ijkl}u_{pij}u_{pkl} + 2|D^4u|^2 + 3A + 2B.$$
Since the first term is a product of $D^4u$ and $(D^3u)^2$ and the remaining terms are quadratic in these quantities, we can hope to absorb it.
To that end we observe that
$$2\left(u_{ijkl} \pm \frac{1}{2}u_{p[ij}u_{k]lp}\right)^2 = 2|D^4u|^2 \pm 6u_{ijkl}u_{pij}u_{pkl} + \frac{3}{2}A + 3 B \geq 0.$$
Using this inequality in the equation for $R$ at $0$ gives
$$\Delta R(0) \geq \frac{3}{2}A - B \geq \frac{1}{2}A + (A-B).$$
Observe that the quantity $A$ can be written
$$A = \sum_{k,p} \left(\sum_{i,j} u_{kij}u_{pij}\right)^2 = \sum_{i,j,l,m} \left(\sum_p u_{pij}u_{plm}\right)^2.$$
In the first way of writing $A$, we consider only the case $k=p$ and use Cauchy-Schwarz to get
$$A \geq \frac{1}{n}R^2.$$
The second way of writing $A$ makes it clear that
$$6(A - B) = \sum_{i,j,l,m} \left(\sum_k u_{kij}u_{klm} + u_{kjl}u_{kim} -2u_{kli}u_{kjm}\right)^2 \geq 0.$$
We conclude that 
$$\Delta R(0) \geq \frac{1}{2}A \geq \frac{1}{2n}R^2,$$
completing the proof.
\end{proof}

\begin{rmk}
Inequalities of the type (\ref{GoodEquation}) play an important role in elliptic PDE and geometric analysis. The model example is:
if $u$ is a positive harmonic function, then the quantity $w := |\nabla(\log u)|^2$ satisfies $\Delta w \geq \frac{2}{n}w^2 + b \cdot \nabla w$, with $|b| \sim |w|^{1/2}$. This implies the Harnack inequality for harmonic functions.
It is a good exercise to derive this inequality. 

A similar inequality known as the ``Li-Yau differential Harnack inequality" holds for caloric functions, and it is useful in the study of geometric flows.
\end{rmk}


\subsection{Pogorelov's $C^2$ Estimate}
An important breakthrough in the theory for (\ref{Equation}) is the following interior $C^2$ estimate of Pogorelov \cite{Pog}:
\begin{thm}
Let $\Omega \subset \mathbb{R}^n$ be a bounded convex domain, and assume that $u \in C^4(\Omega) \cap C^2(\overline{\Omega})$ solves
$$\begin{cases} \det D^2u = f(x_2,\,...,\,x_n) > 0 \quad \text{ in } \Omega, \\
u|_{\partial \Omega} = 0.
\end{cases}
$$
Then
$$|u|u_{11} \leq C\left(n,\,\sup_{\Omega}u_1\right).$$
\end{thm}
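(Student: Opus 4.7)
The plan is to run a Pogorelov-type auxiliary-function argument. I introduce
\[
w \;:=\; \log u_{11} \,+\, \log(-u) \,+\, \tfrac{1}{2}u_1^2.
\]
Because $u$ extends to $\overline\Omega$ with $u|_{\partial\Omega}=0$ while $u_{11}$ and $u_1$ are bounded on $\overline\Omega$, we have $w\to-\infty$ on $\partial\Omega$, so $w$ attains its maximum at an interior point $x_0\in\Omega$. A bound on $w(x_0)$ will give the estimate.

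At $x_0$ I normalize coordinates. Since $f$ depends only on $(x_2,\ldots,x_n)$, the equation is preserved by any unimodular shear $x_1\mapsto x_1+\sum_{j\geq 2}a_jx_j$ and by any orthogonal rotation in the $(x_2,\ldots,x_n)$ variables; both leave $u(x_0)$, $u_1(x_0)$, $u_{11}(x_0)$, and hence $w(x_0)$, unchanged. Choosing the $a_j$ to kill $u_{1j}(x_0)$ for $j\geq 2$ and then rotating to diagonalize the lower $(n-1)\times(n-1)$ block, I may assume $D^2u(x_0)$ is diagonal with $u_{1j}(x_0)=0$ for $j\geq 2$.

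Next I apply the maximum principle at $x_0$, where $w_i=0$ and $Lw\leq 0$ for $L:=u^{ij}\partial_i\partial_j$. Since $f$ is independent of $x_1$, the differentiated equations (\ref{DifferentiatedEquations}) with $e=e_1$ give $Lu_1=0$ and $Lu_{11}=u^{ik}u^{jl}u_{1ij}u_{1kl}$, while $Lu=n$, so
\[
Lw \;=\; L(\log u_{11}) \,+\, \frac{n}{u} \,-\, \frac{u^{ij}u_iu_j}{u^2} \,+\, u^{ij}u_{1i}u_{1j}.
\]
In the normalized coordinates $u^{ij}u_{1i}u_{1j}=u_{11}$, which is the positive ``good'' term. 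The first-order conditions read $u_{11i}/u_{11}=-u_i/u-u_1u_{1i}$, and in particular $u_{11i}/u_{11}=-u_i/u$ for $i\geq 2$. Feeding these into $L(\log u_{11})=L(u_{11})/u_{11}-u^{ij}u_{11i}u_{11j}/u_{11}^2$, the pieces $\sum_{j\geq 2}u_j^2/(u^2u_{jj})$ arising in $L(\log u_{11})$ cancel exactly against the matching contributions of $-u^{-2}u^{ij}u_iu_j$, and after discarding the remaining non-negative third-derivative sum we arrive at
\[
u_{11} \,-\, \frac{u_1^2}{u^2u_{11}} \,+\, \frac{n}{u} \;\leq\; 0.
\]

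Multiplying through by $u^2u_{11}>0$ and setting $y:=|u|\,u_{11}$ yields the quadratic inequality $y^2-ny-u_1^2\leq 0$, whence $|u(x_0)|\,u_{11}(x_0)\leq n+\sup_\Omega|u_1|$. Since $w\leq w(x_0)$ on $\Omega$, this controls $|u|\,u_{11}$ throughout $\Omega$ in terms of $n$ and $\sup_\Omega u_1$ alone. The main obstacle is the bookkeeping in the third paragraph: the terms in $L(\log u_{11})$ arising from the third-derivative combinations $u^{ik}u^{jl}u_{1ij}u_{1kl}$ and $u^{ij}u_{11i}u_{11j}$ have no manifest sign, and the clean cancellation that leaves only the single positive term $u_{11}$ depends crucially on the simplification $u_{1j}(x_0)=0$ for $j\geq 2$ provided by the shear.
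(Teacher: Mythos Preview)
Your argument is correct and follows essentially the same Pogorelov maximum-principle approach as the paper: the same auxiliary function $w=Q=\log u_{11}+\log|u|+\tfrac12 u_1^2$, the same shear-plus-rotation normalization to diagonalize $D^2u$ at the interior maximum, and the same use of the first-order conditions together with the twice-differentiated equation to reduce to the quadratic inequality $(|u|u_{11})^2-n|u|u_{11}\leq u_1^2$. Your description of the cancellation is terse but accurate: after combining $Lu_{11}/u_{11}$ with $-u^{ii}u_{11i}^2/u_{11}^2$ and substituting $u_{11j}/u_{11}=-u_j/u$ for $j\geq 2$, the surviving piece $\sum_{j\geq 2}u_j^2/(u^2u_{jj})$ indeed cancels against the $j\geq 2$ part of $-u^{-2}u^{ij}u_iu_j$, and the remaining third-derivative sum $\tfrac{1}{u_{11}}\sum_{i,j\geq 2}u_{1ij}^2/(u_{ii}u_{jj})$ is nonnegative and may be discarded.
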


\noindent When $f = 1$ the Pogorelov estimate bounds $D^2u(x)$ in terms of the distance from $x$ to $\partial \Omega$ and rough geometric properties (volume, diameter) of $\Omega$.

\begin{rmk}
The Pogorelov estimate implies a Liouville theorem for the Monge-Amp\`{e}re equation: the only global smooth ($C^4$) solutions to (\ref{Equation}) are quadratic polynomials. 
Liouville theorems are closely connected to regularity, and often direct connections can be made by blowup procedures.
\end{rmk}

\begin{rmk}
The Pogorelov estimate suggests that strictly convex solutions to (\ref{Equation}) are smooth. This was justified independently by Cheng-Yau \cite{CY} and Lions \cite{L}. We discuss their
results later in this section.
\end{rmk}

\begin{proof}[{\bf Proof of Pogorelov Estimate}:]
We apply the maximum principle to the quantity
$$Q := \log u_{11} + \log |u| + \frac{1}{2}u_1^2.$$
The first and last terms are ``good terms" that are sub-solutions to the linearized equation, and thus don't take interior maxima. The second term acts as a cutoff, guaranteeing that $Q$ attains
an interior maximum, say at $0$.

We may assume by affine invariance that $D^2u(0)$ is diagonal. Indeed, consider the affine change of coordinates
$$u \rightarrow u\left(x_1 - \frac{u_{12}}{u_{11}}(0)x_2 - ... - \frac{u_{1n}}{u_{11}}(0)x_n,\, x_2,\, ...,\,x_n\right).$$
This transformation preserves the $x_1$-derivatives of $u$ (hence $Q$), and the equation. Furthermore the mixed derivatives $u_{1k}(0)$ become $0$ for $k \geq 2$.
We can then rotate in the $x_2,\,...,\,x_n$ variables to make $D^2u(0)$ diagonal. 
(Roughly, we use the invariance of the equation under certain shearing transformations to align the axes of the ``$D^2u(0)$-spheres" with the coordinate directions, without affecting the derivatives in the direction of interest.)

The first and second derivatives of the equation in the $e_1$ direction at $x = 0$ are
\begin{equation}\label{PogDifferentiatedEquations}
\frac{u_{1ii}}{u_{ii}} = 0, \quad \frac{u_{11ii}}{u_{ii}} = \frac{u_{1ij}^2}{u_{ii}u_{jj}}.
\end{equation}

The condition that $\nabla Q(0) = 0$ is
$$\frac{u_{11i}}{u_{11}} + \frac{u_i}{u} + u_1u_{1i} = 0, \quad 1 \leq i \leq n.$$
Note that the last term vanishes when $i \geq 2$, since $D^2u(0)$ is diagonal.

Finally, the condition that $\frac{Q_{ii}}{u_{ii}}(0) \leq 0$ can be written
$$\frac{u_{11ii}}{u_{11}u_{ii}} - \frac{u_{11i}^2}{u_{11}^2u_{ii}} + \frac{n}{u} - \frac{u_1^2}{u^2u_{11}} - \left.\frac{u_i^2}{u^2u_{ii}}\right\vert_{i = 2}^n + u_{11} + u_1\frac{u_{1ii}}{u_{ii}} \leq 0.$$
The last term vanishes by the linearized equation (the first equation in (\ref{PogDifferentiatedEquations})). Using the twice-differentiated equation for the first term and the condition $\nabla Q(0) = 0$ for the fifth term and grouping these we get
$$\frac{1}{u_{11}}\left(\frac{u_{1ij}^2}{u_{ii}u_{jj}} - \frac{u_{11i}^2}{u_{11}u_{ii}} - \left.\frac{u_{11j}^2}{u_{11}u_{jj}}\right\vert_{j = 2}^n \right) + \frac{n}{u} - \frac{u_1^2}{u^2u_{11}} + u_{11} \leq 0.$$
The first term is positive, so we conclude that
$$(|u|u_{11})^2 - n|u|u_{11} \leq u_1^2$$
at $x = 0$. Since the left side is a quadratic polynomial in $|u|u_{11}$, this gives the desired inequality at $x = 0$.
At a general point $x \in \Omega$ we have 
$$|u|u_{11}(x) \leq e^{Q(x)} \leq e^{Q(0)} \leq C\left(n,\,\sup_{\Omega}u_1\right),$$
completing the proof.
\end{proof}

\begin{rmk}
For general right hand side $f(x) > 0$, the Pogorelov computation gives interior $C^2$ estimates that depend also on $\|\log f\|_{C^2(\Omega)}$.
\end{rmk}

\subsection{The Pogorelov Example}
Pogorelov constructed singular solutions to (\ref{Equation}) in dimension $n \geq 3$ that have line segments in their graphs. We discuss them here.

\begin{rmk}
It is a classical fact solutions to (\ref{Equation}) are strictly convex in the case $n=2$ \cite{A}. 
See e.g. \cite{M1}, Lemma $2.3$ for a simple proof.
\end{rmk}

\vspace{3mm}

Write $x = (x',\,x_n) \in \mathbb{R}^n$. The Pogorelov example has the form
\begin{equation}\label{PogorelovExample}
u(x) = |x'|^{2-2/n}h(x_n).
\end{equation}
Note that $u$ is invariant under rotations around the $x_n$-axis and under the rescalings 
$$u \rightarrow \frac{1}{\lambda^{2-2/n}}u(\lambda x',\, x_n),$$ 
which preserve (\ref{Equation}). Away from $\{|x'| = 0\}$, the equation (\ref{Equation}) for $u$ is equivalent to the ODE
$$h^{n-2}\left(h\,h'' - \frac{2n-2}{n-2}h'^2\right) = c(n) > 0$$
for $h$. There exists a convex, positive, even solution to this ODE around $0$ with the initial conditions $h(0) = 1$ and  $h'(0) = 0$. It is a good exercise to show that $h$ blows up in finite time $\rho_n$.
With this choice of $h$, $u$ is convex and solves $(\ref{Equation})$ in the slab $\{|x_n| < \rho_n\}$ away from $\{|x'| = 0\}$ (where the graph has a line segment).
Another good exercise is to verify that $u$ is an Alexandrov solution to (\ref{Equation}) in $\{|x_n| < \rho_n\}$.

\vspace{3mm}

The Pogorelov example is not a classical solution. Roughly, $u_{nn}$ goes to $0$ near the $x_n$ axis, and the other second derivatives go to $\infty$ at just the right rate for the product to be $1$.
We note that $u$ is $C^{1,\,1-2/n}$, and $W^{2,\,p}$ for $p < n(n-1)/2$.

\begin{rmk}
The Pogorelov example shows that that there is no pure interior regularity for (\ref{Equation}) in dimensions $n \geq 3$ (in contrast with the Laplace and minimal surface
equations). This is closely related to the affine invariance.
\end{rmk}

\begin{rmk}
There are singular solutions to (\ref{Equation}) in dimension $n \geq 3$ that are merely Lipschitz, and still others that degenerate on higher-dimensional subspaces (up to any dimension
strictly smaller than $n/2$). They can all be viewed
as generalizations of the Pogorelov example. See e.g. \cite{M3}, Chapter $2$ for a detailed discussion of these examples.
\end{rmk}

\begin{rmk}
The Pogorelov example has the ``best possible regularity" for a singular solution. More specifically, a solution $u$ to (\ref{Equation}) is strictly convex
if either $u \in C^{1,\,\beta}$ for $\beta > 1-2/n$ (see \cite{U}) or $u \in W^{2,\,p}$ with $p \geq n(n-1)/2$ (see \cite{CM}).
\end{rmk}


\subsection{The Contributions of Cheng-Yau and Lions}
When $\partial \Omega$ is smooth and uniformly convex, Cheng-Yau \cite{CY} and Lions \cite{L} obtained solutions in $C^{\infty}(\Omega) \cap C(\overline{\Omega})$ to the Dirichlet problem
$$\det D^2u = 1 \text{ in } \Omega, \quad u|_{\partial \Omega} = 0$$
using different techniques. By approximation, this result implies that any strictly convex Alexandrov solution to (\ref{Equation}) is smooth. 
Both methods rely on the Pogorelov interior $C^2$ estimate. We discuss these approaches below.

\begin{rmk}
The smoothness of $u$ up to $\partial \Omega$ was later obtained by Caffarelli-Nirenberg-Spruck via boundary $C^2$ estimates \cite{CNS}. We discuss these estimates in the next section.
\end{rmk}

\begin{rmk}
This result in fact implies that any Alexandrov solution to (\ref{Equation}) is smooth in the (open) set of strict convexity of the solution, and that the agreement set of the graph 
with any supporting hyperplane contains no interior extremal points.
\end{rmk}

\noindent For the remainder of the section we assume that $\partial \Omega$ is smooth and uniformly convex, and $u$ is the unique Alexandrov solution to (\ref{Equation}) in $\Omega$ with zero boundary data.

\subsubsection{The Approach of Cheng-Yau}
Cheng-Yau used a geometric approach based on their (previous) solution of the Minkowski problem \cite{CY2}. In the Minkowski problem we are given a positive smooth function $K$ on $S^{n-1}$, and we construct
a smooth convex body whose boundary has outer unit normal $\nu$, and prescribed Gauss curvature $K(\nu)$. Here we will assume the smooth solvability of this problem, and describe how it is used in \cite{CY}.

\vspace{3mm}

The key observation is that if $\det D^2w = f$ in $\Omega$, then the graph of the Legendre transform of $w$, with downward unit normal, has Gauss curvature
$$K\left(\frac{(x,\,-1)}{(1+|x|^2)^{1/2}}\right) = \frac{1}{f(x)}(1 + |x|^2)^{-\frac{n+2}{2}}$$
for $x \in \Omega$. (For the definition and properties of the Legendre transform, see e.g. \cite{F}). 
It is thus natural to use the solution to the Minkowski problem for the Legendre transform of $u$ to prove regularity.
The obstruction to using this approach directly is the possible existence of a line segment in the graph of $u$ that reaches
$\partial \Omega$ (and thus gets mapped by $\partial u$ to the boundary of the domain for the Legendre transform). 

\vspace{3mm}

To overcome this difficulty, Cheng-Yau solve an approximating problem with right side that blows up near $\partial \Omega$ so that the domain of the Legendre transform is all of 
$\mathbb{R}^n$. More precisely, they construct (Alexandrov) solutions to $\det D^2w = f$ in $\Omega$ with $w|_{\partial \Omega} = 0$, where $f$ is smooth and blows up like distance from the boundary of $\Omega$ to a power smaller than $-1$. Using barriers they show that $\nabla w(\Omega) = \mathbb{R}^n$, which prevents line segments from extending to $\partial \Omega$. They then consider the convex bodies in $\mathbb{R}^{n+1}$ 
bounded by the graph of the Legendre transform of $w$ and large spheres, which may be singular where the spheres intersect the graph. Their Gauss curvatures at $(x,\,-1)/(1+|x|^2)^{1/2}$ agree with $(1/f)(x)(1+|x|^2)^{-\frac{n+2}{2}}$ for
$x \in \Omega' \subset \subset \Omega$. 
By approximating the Gauss curvature measures with positive smooth functions and solving the Minkowski problem, they produce smooth functions that locally solve $\det D^2v = f$ and approximate $w$ locally uniformly.
The Pogorelov and Calabi estimates (adapted to the case of general smooth right side) imply that $w$ is smooth in $\Omega$.

\vspace{3mm}

To produce classical solutions to (\ref{Equation}), they take a limit of solutions with right hand sides $f$ that equal $1$ away from decreasing neighborhoods of $\partial \Omega$, and blow up appropriately fast near $\partial \Omega$.

\subsubsection{The Approach of Lions}
Lions developed a method based on PDE techniques.
Write $\Omega = \{w < 0\}$ for some smooth, uniformly convex function $w$ on $\mathbb{R}^n$. Let $\rho$ be a smooth function that is 
$0$ in $\Omega,$ positive in $\mathbb{R}^n \backslash \overline{\Omega}$, and equals $1$ outside a neighborhood of $\Omega$. Lions solves, for each $\epsilon > 0$, the approximating problem
\begin{equation}\label{Penalization}
\det (D^2u^{\epsilon} - \epsilon^{-1}\rho u^{\epsilon}I) = 1
\end{equation}
in $\mathbb{R}^n$,
where $u^{\epsilon} \in C^{\infty}_b(\mathbb{R}^n)$ (that is, $\|u^{\epsilon}\|_{C^k(\mathbb{R}^n)} < \infty$ for all $k$) and $D^2u^{\epsilon} > \epsilon^{-1}\rho u^{\epsilon} I.$ The
extra term in the equation (\ref{Penalization}) has a favorable sign for the maximum principle, which ``compactifies" the problem and pushes $u^{\epsilon}$ to $0$ outside of $\Omega$ as $\epsilon \rightarrow 0$.

\vspace{3mm}

To see how this works, assume that we have solved the approximating problems (\ref{Penalization}). For any $\delta > 0$, let $\tilde{w}$ be a compactly supported smooth function that agrees with $w$ in $\{w < \delta\}$ and satisfies 
$|\tilde{w}| < 2\delta$ outside of $\Omega$. 
Then $C(\tilde{w} - 4\delta)$ is a subsolution to (\ref{Penalization}) for all $\epsilon$ sufficiently small (depending on $\delta$), with $C$ large independent of $\epsilon,\,\delta$. Since $0$ is a supersolution,
it follows from the maximum principle that $-6C\delta \leq u^{\epsilon} < 0$ outside of $\Omega$ for all $\epsilon$ small.
(It is important that $u^{\epsilon} \in C^{\infty}_b(\mathbb{R}^n$), so that we can use the maximum principle on all of $\mathbb{R}^n$.) Since $u^{\epsilon}$ solve (\ref{Equation}) in $\Omega$, the maximum principle implies that the family
$\{u^{\epsilon}\}$ is Cauchy in $C^0(\mathbb{R}^n)$, and thus converges uniformly as $\epsilon \rightarrow 0$. The Pogorelov estimate implies local smooth convergence in $\Omega$
to the solution of (\ref{Equation}) with $0$ boundary data.

\vspace{3mm}

To solve (\ref{Penalization}), it suffices to obtain global $C^2$ estimates (depending on $\epsilon$) for solutions in $C^{\infty}_b(\mathbb{R}^n)$. Then the Evans-Krylov theorem gives
global $C^{2,\,\alpha}$ estimates, and the method of continuity (see e.g. \cite{GT}, Chapter $17$) can be applied.

For the $C^2$ estimate, the key points are the favorable sign of the extra term, and the concavity of the equation. To see how this works, assume for simplicity that we already have a global $C^1$ estimate.
Differentiating (\ref{Penalization}) twice gives
$$A^{ij}u^{\epsilon}_{eeij} - \frac{\rho}{\epsilon}u^{\epsilon}_{ee}\sum_{i = 1}^n A^{ii} \geq -C(\epsilon) \sum_{i = 1}^n A^{ii},$$
where $A_{ij} = u^{\epsilon}_{ij} - \epsilon^{-1}\rho u^{\epsilon}\delta_{ij}$. 
We have dropped a positive term quadratic in third derivatives on the right side, that comes from the concavity of the equation. 
Let $\overline{w}$ be a smooth compactly supported function that agrees with $w$ in a neighborhood of $\overline{\Omega}$. Then $H - K\overline{w}$ is a super-solution 
to the equation for $u^{\epsilon}_{ee}$ for $H$ and $K$ large, giving an upper bound for $D^2u^{\epsilon}$. The condition $D^2u^{\epsilon} > \epsilon^{-1}\rho u^{\epsilon} I$ gives the required lower bound.

\begin{rmk}
The approach of Lions shows for a large class of nonlinear concave equations that any (viscosity) solution can be uniformly approximated by smooth solutions.
If in addition one has interior $C^2$ estimates, this approach yields smooth solutions. It is useful when boundary $C^2$ estimates are not available
(see e.g. \cite{MS} for an interesting example).
\end{rmk}

\newpage
\section{Boundary Regularity}
In this section we discuss the regularity of solutions to (\ref{Equation}) up to the boundary (due to Caffarelli-Nirenberg-Spruck \cite{CNS}), provided the data are sufficiently smooth.

\subsection{The Boundary Estimates of Caffarelli-Nirenberg-Spruck}
We will discuss following theorem, which is a combination of contributions due to Caffarelli-Nirenberg-Spruck \cite{CNS} and Wang \cite{W}.
\begin{thm}\label{ClassicalDP}
Let $\varphi \in C^3(\partial B_1)$, and let $\omega$ denote the modulus of continuity of $D^3\varphi$. Then there exists a unique solution in $C^{2,\,\alpha}(\overline{B_1})$ of 
$$\begin{cases}
\det D^2u = 1 \text{ in } B_1, \\
u|_{\partial B_1} = \varphi.
\end{cases} $$
Furthermore, the solution satisfies the estimate
\begin{equation}\label{GlobalEstimate}
\|u\|_{C^{2,\,\alpha}(\overline{B_1})} \leq C(n,\, \|\varphi\|_{C^3(\partial B_1)},\, \omega).
\end{equation}
\end{thm}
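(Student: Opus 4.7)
The plan is to deduce existence together with the estimate \ref{GlobalEstimate} via the method of continuity, and uniqueness from the comparison principle (Proposition \ref{ComparisonPrinciple}). I would connect the given $\varphi$ to the trivial datum $\varphi_0 \equiv 0$ by the family $\varphi_t = t\varphi$, $t \in [0,1]$, and take as starting point the explicit smooth solution $u_0 = \tfrac{1}{2}(|x|^2 - 1)$ at $t = 0$. The set of $t$ for which a $C^{2,\,\alpha}(\overline{B_1})$ solution $u_t$ exists is open: at any such $u_t$ the linearization $Lv = u_t^{ij} v_{ij}$ is linear, elliptic, and has $C^\alpha$ coefficients (once $u_t \in C^{2,\alpha}$), so Schauder theory inverts $L$ on functions with zero boundary trace. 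Closedness reduces to a uniform a priori estimate of the form \ref{GlobalEstimate}, which is the core of the argument.

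The $C^0$ bound comes by comparison with harmonic and quadratic barriers. For the boundary $C^1$ estimate I would, at each $y \in \partial B_1$, sandwich $u_t$ between the tangent plane to $\varphi$ at $y$ adjusted by a term $\lambda(1 - |x|^2)$ (upper) and a supporting paraboloid built from $\varphi$ (lower); $\varphi \in C^3$ suffices. Convexity then propagates the gradient bound into the interior. The interior $C^2$ bound is immediate from Pogorelov's theorem, yielding $|D^2 u_t| \leq C/\mathrm{dist}(\cdot,\partial B_1)^{\beta}$ for some $\beta > 0$; what remains is therefore the boundary $C^2$ bound, which is the main obstacle.

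I would carry out the boundary $C^2$ estimate (due to Caffarelli-Nirenberg-Spruck) after flattening $\partial B_1$ locally near a boundary point $y$. The pure tangential second derivatives $u_{\tau\tau}(y)$ are controlled directly by $\varphi|_{\partial B_1}$ together with the curvature of $\partial B_1$, via tangential differentiation of the boundary identity $u_t = t\varphi$. For the mixed derivatives $u_{\tau n}$, differentiate $\det D^2 u_t = 1$ once in a tangential field to obtain $u_t^{ij}(u_{t,\tau})_{ij} = 0$, then apply the maximum principle for this linearized operator to $u_{t,\tau} - (t\varphi)_\tau \mp [A\,d(x) + B|x - y|^2]$, where $d$ denotes distance to $\partial B_1$; choosing $A,B$ in terms of $\|\varphi\|_{C^3}$ and the already-controlled tangential data forces the desired bound on $u_{\tau n}(y)$. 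The delicate step is the normal-normal derivative: once one has a \emph{positive lower bound} on the tangential $(n-1) \times (n-1)$ block $[u_{\tau_i \tau_j}]$ at the boundary, the equation itself yields an upper bound on $u_{nn}$ from the bounds on the other blocks. Producing that lower bound is the hardest part; it relies on a double-tangential-derivative auxiliary function combined with the concavity of $\log\det$ and a careful maximum principle argument on the boundary, and this is precisely where the CNS insight is essential.

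Once $\|u_t\|_{C^2(\overline{B_1})}$ is bounded uniformly in $t$, the equation is uniformly elliptic and concave on the range of $D^2 u_t$, so Theorem \ref{EvansKrylov} gives interior $C^{2,\,\alpha}$ estimates. For the $C^{2,\,\alpha}$ bound up to the boundary I would invoke Krylov's boundary analogue of Evans-Krylov for concave uniformly elliptic equations in a half-ball, together with Wang's refinement \cite{W} that gives the sharp dependence on the modulus $\omega$ of $D^3\varphi$ by a perturbation argument comparing $u_t$ near a boundary point to the solution of a frozen-coefficient problem with polynomial boundary data. Combining the global $C^{2,\,\alpha}$ bound with the openness discussed above closes the continuity method and simultaneously delivers existence of the $C^{2,\,\alpha}(\overline{B_1})$ solution and the estimate \ref{GlobalEstimate}.
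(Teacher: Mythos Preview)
Your overall architecture matches the paper: reduce to a priori boundary $C^2$ estimates, then use concavity and Evans--Krylov/Krylov to finish. The tangential bound via $D_T^2u = D_{S^{n-1}}^2\varphi + u_\nu I$ and the mixed bound via the linearized equation for $u_\tau$ are the same ingredients the paper uses (the paper uses pure linear barriers rather than your $A\,d(x)+B|x-y|^2$, exploiting that rotations are symmetries so $u_\tau$ actually solves $u^{ij}(u_\tau)_{ij}=0$; both work).

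Where you diverge is the decisive step, the positive lower bound on the tangential Hessian that controls $u_{\nu\nu}$. You invoke a ``double-tangential-derivative auxiliary function combined with the concavity of $\log\det$'' and call this the CNS insight. That is not the argument the paper gives, and it is precisely the step where the $C^3$ hypothesis (and the modulus $\omega$) enters. The paper instead uses Wang's contradiction argument: normalize so $u(0)=0$, $\nabla u(0)=0$; if the tangential second derivative $u_{11}(0)$ vanished, then expanding $u$ along $\partial B_1$ gives $u(x_1,\psi(x_1)) = \gamma x_1^3 + o(|x_1|^3)$, and the key observation is that $\gamma$ must also vanish (otherwise $u<0$ somewhere, contradicting convexity). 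This forces $\{u<h\}$ to contain a box of dimensions $R(h)h^{1/3}\times R^2(h)h^{2/3}$ with $R(h)\to\infty$, and a quadratic barrier of determinant $\sim R(h)^{-6}$ gives a contradiction with $\det D^2u = 1$. This argument is what makes the sharp $C^3$ dependence (rather than $C^{3,1}$ or smoother, as in the original \cite{CNS}) visible; your auxiliary-function sketch does not make contact with that mechanism, and as written it is too vague to verify that it would close under only a $C^3$ hypothesis on $\varphi$.
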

\noindent An interesting feature of Theorem \ref{ClassicalDP} is the requirement of $C^3$ boundary data to obtain $C^{2,\,\alpha}$ solutions,  in contrast with the uniformly elliptic case. We show in the next section
that this hypothesis is in fact necessary.

\begin{proof}[{\bf Sketch of Proof:}]
Standard (by now) techniques in PDE reduce the problem to obtaining the estimate
$$\|D^2u\|_{C^0(\overline{B_1})} \leq C(n,\, \|\varphi\|_{C^3(\partial B_1)},\, \omega),$$
for a solution $u \in C^{2,\,\alpha}(\overline{B_1})$. Concavity further reduces the problem to bounding the second derivatives on $\partial B_1$. For the details of these reductions, see e.g. \cite{F}.

\vspace{3mm}

\begin{rmk}
The point is that boundary $C^2$ estimates make the problem uniformly elliptic, so we can rely on the general theory for concave, uniformly elliptic equations. 

A historical remark is that this theory was not fully developed at the time of \cite{CNS}.  
In particular, Krylov's boundary Harnack inequality, which is used to obtain global $C^{2,\,\alpha}$ estimates for such equations (see e.g. \cite{CC}, Chapter $9$), was not available.
In \cite{CNS} the authors instead rely on one-sided estimates for some third derivatives at the boundary which are special to the Monge-Amp\`{e}re equation.
\end{rmk}

\vspace{3mm}

Let $T$ be the tangent plane to $\partial B_1$ at the point $\nu$. Then the relation
$$D_T^2u = D_{S^{n-1}}^2\varphi + u_{\nu} I$$
gives a bound on the tangential second derivatives in terms of $\|\varphi\|_{C^2(\partial B_1)}$. (Here we use that $u_{\nu}$ is bounded in terms of $\|\varphi\|_{C^2(\partial B_1)}$, which can be
proved with simple barriers).

\vspace{3mm}

Let $\tau$ denote a tangential vector field on $\partial B_1$ generated by a rotation. Since (\ref{Equation}) is rotation-invariant, $u_{\tau}$ solves the linearized equation $u^{ij}(u_{\tau})_{ij} = 0$, with boundary
data $\varphi_{\tau}$. We can bound the normal derivatives of $u_{\tau}$ on $\partial B_1$ in terms of $\|\varphi\|_{C^3(\partial B_1)}$ using linear functions as barriers. (That is, if $l$ is the linear part
of $\varphi_{\tau}$ at $\nu \in \partial B_1$, then $l(x) \pm C(x \cdot \nu - 1)$ trap the boundary data $\varphi_{\tau}$ for $C$ large depending on $\|\varphi\|_{C^3(\partial B_1)}$ and solve the linearized equation).
This gives the desired bound on the mixed second derivatives $u_{\tau\nu}$.

\vspace{3mm}

Finally, we consider the normal second derivatives. To emphasize ideas, we assume $n = 2$. (The case $n > 2$ is similar). 
After translating and subtracting a linear function, we may assume that we are working in $B_1(e_2)$, and that $u(0) = 0,\, \nabla u(0) = 0$. In particular, $u \geq 0$ by convexity.
Write $\partial B_1$ locally as a graph $x_2 = \psi(x_1)$. By the equation and the previous estimates, it suffices to obtain a positive lower bound for $u_{11}(0)$.
By the $C^3$ regularity of the boundary data, we have the expansion
$$u(x_1,\,\psi(x_1)) = u_{11}(0)x_1^2 + \gamma x_1^3 + o(|x_1|^3).$$
Assume by way of contradiction that $u_{11}(0) = 0$. The key observation is that this implies $\gamma = 0$. Indeed, if not then $u < 0$ somewhere on $\partial B_1$.
We conclude that the set $\{u < h\}$ contains a box $Q$ centered on the $x_2$ axis of length $R(h)h^{1/3}$ and height $R^2(h)h^{2/3}$, with $R(h) \rightarrow \infty$ as $h \rightarrow 0$. It is easy
to construct a convex quadratic polynomial $P$ that is larger than $h$ on $\partial Q$, vanishes at the center of $Q$, and satisfies
$$\det D^2P = 4R(h)^{-6}.$$
For $h$ small this contradicts the maximum principle.
\end{proof}

\vspace{3mm}

\begin{rmk}
If $\Omega$ is smooth and uniformly convex, $u|_{\partial \Omega} = 0$, and $\partial \Omega$ has second fundamental form $II$, then the relation 
$$D_T^2 u = II \, u_{\nu}$$
quickly gives the positive lower bound for the tangential Hessian (the most difficult part of the nonzero boundary data case), see e.g. \cite{F}. 
Here $T$ is a tangent plane to $\partial \Omega$, and $\nu$ is the outer normal at the tangent point.
\end{rmk}


\subsection{Necessity of $C^3$ Data}
Interestingly, $C^3$ boundary data are necessary to obtain solutions to (\ref{Equation}) that are $C^2$ up to the boundary. This was observed by Wang \cite{W}. 

\vspace{3mm}

The point is the affine degeneracy of the equation, which allows us to find solutions with different homogeneities in different directions. 
Indeed, guided by the affine invariance we seek solutions in $\{x_2 > x_1^2\} \subset \mathbb{R}^2$ with the homogeneity
$$u(x_1,\,x_2) = \lambda^{-3}u(\lambda x_1,\, \lambda^2 x_2).$$
Such solutions have cubic growth along the parabolas $x_2 = Cx_1^2$, and satisfy $u_{22} \sim x_2^{-1/2}$ near the origin.

\vspace{3mm}

Take $h(t) = u(t,\,1)$, so that $u(x_1,\,x_2) = x_2^{3/2}h\left(x_2^{-1/2}x_1\right)$. The equation (\ref{Equation}) for $u$ is equivalent to the ODE
\begin{equation}\label{C3ODE}
\frac{1}{4}h''(3h + th') - h'^2 = 1
\end{equation}
for $h$. There exists a positive even, convex solution to (\ref{C3ODE}) on $[-1,\,1]$ (one can e.g. solve (\ref{C3ODE}) a neighborhood of $0$ with the initial conditions $h(0) = 1,\, h'(0) = 0$ and then use the scaling invariance
$h \rightarrow \lambda^{-1}h(\lambda x)$ of the ODE). This defines a solution $u$ to (\ref{Equation}) in the domain $\{x_2 > x_1^2\}$. 
Along the boundary we have $u(x_1,\, x_1^2) = h(1)|x_1|^3$, which is $C^{2,\,1}$ but not $C^3$. Since $u_{22}$ blows up near $0$, this completes the example.


\newpage
\section{Recent Advances and Open Questions}
The Monge-Amp\`{e}re equation remains an active field of study. In this section we list some of the recent results and open questions. We are far from complete.

\subsection{Nondegenerate Case}
Here we discuss the ``non-degenerate" case $\det D^2u = f,$ where $0 < \lambda \leq f \leq \lambda^{-1}$ for some $\lambda > 0$. We call it ``nondegenerate" since, at any point,
$D^2u$ is uniformly elliptic after an affine transformation of determinant $1$.

\begin{enumerate}
\item For many applications (e.g. optimal transport), the right hand side depends on $\nabla u$. Since we don't know a priori anything about the regularity of $\nabla u$ for such equations, it is natural
to consider the case of merely bounded right hand side.

Caffarelli showed that strictly convex solutions to $0 < \lambda \leq \det D^2u \leq \lambda^{-1}$ are locally $C^{1,\,\alpha}$ for some $\alpha > 0$ \cite{Ca1}. For the applications, this reduced the problem to considering $f \in C^{\alpha}$.
He then developed a perturbation theory from the case $f = 1$, showing in particular that if $f \in C^{\alpha}$ then strictly convex solutions are locally $C^{2,\,\alpha}$ \cite{Ca2}.

\item As we have seen, Alexandrov solutions to $\det D^2u = 1$ are smooth on the open set $B_1 \backslash \Sigma$ of strict convexity for $u$. It is natural to ask about the size and structure
of the singular set $\Sigma$. In \cite{M1} we showed that $\mathcal{H}^{n-1}(\Sigma) = 0$, and we constructed examples to show that this cannot be improved. (We in fact
show that if $\det D^2u \geq 1$, then $u$ is strictly convex away from a set of vanishing $\mathcal{H}^{n-1}$ measure.)

One interesting consequence is unique continuation: if $\det D^2u = \det D^2v = 1$ in a connected domain $\Omega$ and the interior of $\{u = v\}$ is non-empty, then $u \equiv v$ in $\Omega$. 
Heuristically, any two points of strict convexity communicate through a path of such points. We used that $u-v$ solves a linear equation with smooth coefficients in the set of strict convexity;
unique continuation when we just have $\lambda \leq f \leq \lambda^{-1}$ remains open.

\item Savin recently developed a boundary version of Caffarelli's interior theory using new techniques \cite{S2}. This led to global versions of Caffarelli's results.

\item De Philippis and Figalli proved interior $L\log^k L$ estimates for the second derivatives of strictly convex solutions to $0 < \lambda \leq \det D^2u \leq \lambda^{-1}$, for any $k$ \cite{DF}. Together with Savin they improved this to $W^{2,\,1+\epsilon}$ regularity, where $\epsilon(n,\,\lambda)$ \cite{DFS}. An interesting consequence of this result is the long-time existence of weak solutions to the semigeostrophic system (see \cite{ACDF1}, \cite{ACDF2}). 
We remark that interior $W^{2,\,1}$ regularity without any strict convexity hypotheses is true, by combining these results with the partial regularity from \cite{M1}. 
\end{enumerate}

\subsection{Degenerate Case}
To conclude we discuss the degenerate case $\det D^2u = f$, where $f = 0$ or $\infty$ at some points. At such points, no affine rescaling of determinant $1$ makes $D^2u$ uniformly elliptic.

\begin{enumerate}
\item To obtain physically meaningful results for the semigeostrophic system, it is natural to investigate $W^{2,\,1}$ regularity in the case $0 \leq f \leq \Lambda < \infty,\, u|_{\partial \Omega} = 0$. 
There are simple counterexamples to $W^{2,\,1}$ regularity in dimension $n \geq 3$
whose graphs have ``corners" on part of a hyperplane. On the other hand, solutions are $C^1$ when $n = 2$ \cite{A}. This gave hope for $W^{2,\,1}$ regularity in $2D$. 
In \cite{M2} we show this is false by constructing solutions in $2D$ whose second derivatives have nontrivial Cantor part.

The support of $f$ in our example is rough (the boundary has ``fractal" geometry), and the example is not strictly convex. The $W^{2,\,1}$ regularity for strictly convex solutions to $0 \leq \det D^2u \leq \Lambda$ in $2D$ remains open.

\item Motivated by applications to the Weyl problem, Daskalopoulos-Savin investigated degenerate Monge-Amp\`{e}re equations of the form $\det D^2u = |x|^{\alpha}$ in $B_1 \subset \mathbb{R}^n$, where $\alpha > 0$ \cite{DaS}. In the case $n = 2$
they characterize the behavior of solutions near $0$. An important tool in the analysis is the partial Legendre transform, which in $2D$ takes the Monge-Amp\`{e}re equation to a linear equation. 
It remains an interesting open problem to analyze the behavior of solutions near $0$ in higher dimensions.

\item Savin recently characterized the boundary behavior of solutions when $f \sim$
distance from $\partial \Omega$ to a positive power, and at boundary points the data separates from the tangent plane of the solution quadratically \cite{S1}. Using these results Le-Savin prove global regularity
of the eigenfunctions of $\det^{1/n}$ in \cite{LeS}.
\end{enumerate}

\newpage

\end{document}